\DeclareMathOperator{\Supp }{supp}
\DeclareMathOperator{\D}{div}
\DeclareMathOperator{\dist}{dist}
\newtheorem{theorem}{Theorem}[section]
\newtheorem{lemma}[theorem]{Lemma}
\newtheorem{proposition}[theorem]{Proposition}
\newtheorem{remark}[theorem]{Remark}
\setlist[enumerate]{itemsep=3pt}
\setlist[itemize]{itemsep=3pt}
\def \TT  {\mathbb{T}} 
\def \RR {\mathbb{R}}  
\def \NN {\mathbb{N}}  
\def \p {\partial}
\def \l {\lambda}
\def \ep {\epsilon}
\def \om {\omega}
\def \e {\mathbf{e}}
\def \ez {\mathbf{e}_z}
\def \et {\mathbf{e}_\theta}
\def \er {\mathbf{e}_r}
\numberwithin{equation}{section}
\begin{document}

\title[Illposedness of  in supercritical spaces]{Illposedness of   incompressible fluids in supercritical Sobolev spaces}

\author{Xiaoyutao Luo}

\address{Academy of Mathematics and Systems Science, Chinese Academy of Sciences, Beijing  100190, China; and Morningside Center of Mathematics, Chinese Academy of Sciences}

\email{xiaoyutao.luo@amss.ac.cn}

\subjclass[2020]{35Q31,35Q30}

\keywords{Euler equations, Navier-Stokes equations, illposedness}
\date{\today}

\begin{abstract}
We prove that the 3D Euler and Navier-Stokes equations are strongly illposed in supercritical Sobolev spaces. In the inviscid case, for any $0<s<   \frac{5}{2} $, we  construct a $C^\infty_c$ initial   velocity field  with arbitrarily small $H^{s} $ norm for which the unique local-in-time smooth solution of the 3D Euler equation develops large $\dot{H}^{s}$ norm inflation almost instantaneously.  In the viscous case, the same $\dot{H}^{s}$ norm inflation occurs in the 3D Navier-Stokes equation for $0<s<   \frac{1}{2} $, where $s = \frac{1}{2}$ is   scaling critical   for this equation.
\end{abstract}

\date{\today}

\maketitle

\section{Introduction}\label{sec:intro}

In this paper, we consider two fundamental equations in incompressible fluid dynamics, the three-dimensional Euler  and  Navier-Stokes equations. They are hydrodynamics models that describe the motion of an incompressible inviscid or viscous fluid.

While the global regularity of both equations are still outstanding open questions, the aim of this paper is to show that these two equations are actually \emph{illposed} in supercritical Sobolev spaces.  

Let us first consider the inviscid case. The equation of an ideal fluid is the 3D Euler equation,
\begin{equation}\label{eq:euler}
\begin{cases}
\p_t u + u\cdot \nabla u + \nabla p = 0 &\\
\D u =0  & \\
u|_{t = 0} = u_ 0 &
\end{cases}
(t,x) \in [0,T] \times \RR^3
\end{equation}
where $ u: [0,T] \times \RR^3  \to \RR^3$ is the unknown velocity field, $ p: [0,T] \times \RR^3 \to \RR $ is the scalar pressure, and $ u_0 : \RR^3 \to \RR^3$ is the initial data.

In the literature, function spaces having the same scaling property as the Lipschitz space are called critical for \eqref{eq:euler}, such as  $ \dot{H}^{\frac{5}{2}}$. It is known that the Euler equation is locally wellposed  in $H^{s}$  if $ s>   \frac{5}{2} $ and illposed if $ s =    \frac{5}{2} $.  

Our first main result shows that the illposedness persists in all supercritical regime $0 < s <    \frac{5}{2} $.

\begin{theorem}\label{thm:Hs}
The 3D Euler equation \eqref{eq:euler} is strongly illposed in $H^{s} (\RR^3)$ for any $0< s <   \frac{5}{2}  $ in the following sense.

Let   $0< s <  \frac{5}{2} $. For any $\ep>0$, there exists a vector field $ u_0 \in C_c^\infty(\RR^3) $ such that all of the following holds.
\begin{enumerate}
\item $u_0$ is divergence-free   and
\begin{equation}\label{eq:thm_Hs_1}
| u_0 |_{H^{s}(\RR^3) } \leq \ep .
\end{equation}

\item There exists a time $0< t^* \leq  \ep   $ such that $ u \in C^\infty( [0,t^*] \times \RR^3)$ and
\begin{equation}\label{eq:thm_Hs_2}
| u   (t^*) |_{\dot{H}^{s}(\RR^3) } \geq \ep^{-1} .
\end{equation}

\end{enumerate}

\end{theorem}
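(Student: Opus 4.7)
The plan is to prove norm inflation by a two-scale construction: initial data $u_0 = U + V$ consisting of a ``background'' field $U$ that generates local Lagrangian stretching, plus a high-frequency wave packet $V$ placed inside the stretching region. The supercritical condition $s<\tfrac{5}{2}$ is precisely what allows simultaneously $\|U\|_{H^s}\ll 1$ and $\|\nabla U\|_{L^\infty}\gg 1$, and the mechanism that inflates $\dot H^s$ is the exponential growth of the Fourier support of $V$ under the flow of $U$.

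For the background, I would fix once and for all a smooth, divergence-free, compactly supported $U_0:\RR^3\to\RR^3$ that agrees with a hyperbolic linear field $Ax$ (with $A$ traceless and having distinct real eigenvalues) on a neighborhood of the origin. Standard local well-posedness of \eqref{eq:euler} in $H^{s_0}$ with $s_0>\tfrac{5}{2}$ gives a smooth solution $U_0(t)$ on some interval $[0,T_0]$, and the associated flow map $\Psi_t$ satisfies $|d\Psi_t(0)\,v|\ge e^{\sigma t}|v|$ for $v$ in a fixed unstable cone, with some $\sigma>0$. Rescaling to $U_\lambda(x):=\lambda^{-\alpha}U_0(\lambda x)$ with $\alpha\in\bigl(s-\tfrac{3}{2},\,1\bigr)$ — a nonempty interval precisely because $s<\tfrac{5}{2}$ — makes $\|U_\lambda\|_{H^s}$ arbitrarily small as $\lambda\to\infty$ while boosting the local strain rate to $\sigma\lambda^{1-\alpha}$.

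The perturbation I would take to be a sharply localized wave packet $V(x)=\chi(x)\,\cos(N\xi\cdot x)\,v_0$ with $v_0\perp\xi$ so that $V$ is divergence-free up to a lower-order correction. Choosing $\chi$ with $\|\chi\|_{L^2}\sim \ep N^{-s}$, the sum $u_0:=U_\lambda+V$ satisfies $\|u_0\|_{H^s}\le\ep$. Linearizing \eqref{eq:euler} around $U_\lambda$, the leading-order evolution of $V$ is transport by the background, under which the wave vector rotates as $\xi(t)=(d\Psi_t)^{-\top}\xi$. Hyperbolicity gives $|\xi(t)|\gtrsim e^{\sigma\lambda^{1-\alpha} t}|\xi|$ in the unstable direction, whence
\[
\|V(t,\cdot)\|_{\dot H^s}\gtrsim e^{s\sigma\lambda^{1-\alpha} t}\,\|V\|_{\dot H^s}.
\]
Taking $\lambda$ large enough that $e^{s\sigma\lambda^{1-\alpha}t^*}\ge \ep^{-3}$ for some $t^*\in(0,\ep]$ then yields the required inflation $\|u(t^*)\|_{\dot H^s}\ge\ep^{-1}$.

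The main obstacle, and where most of the real work lies, is making this linearized picture rigorous. One must justify that the true solution $u=U_\lambda+V+w$ has error $w$ that is small in $\dot H^s$ compared to the predicted growth of $V$, uniformly as $N\to\infty$. This requires a WKB-type ansatz for $V$ to handle the pressure correction (which is not simply transported), together with a stability estimate for $w$ in a norm strong enough to close the quadratic nonlinearity but weak enough that $U_\lambda$ remains bounded in it — typically a high Sobolev norm $H^{s_0}$ for the background coupled with a frequency-localized norm for the perturbation. Once this estimate is closed, $u_0$ is $C_c^\infty$ and divergence-free by construction, and the two conclusions of the theorem follow.
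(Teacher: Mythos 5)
Your proposal follows the classical ``hyperbolic background plus high--frequency wave packet'' scheme, which is genuinely different from the paper's construction, but it contains a fatal gap: the amplification you need cannot be produced by a rescaled smooth background. The quantity $\int_0^{T}\|\nabla U_\lambda(\tau)\|_{L^\infty}\,d\tau$ is invariant under your rescaling $U_\lambda(x)=\lambda^{-\alpha}U_0(\lambda x)$: the strain rate is boosted to $\sigma\lambda^{1-\alpha}$, but the lifespan of the rescaled solution (and the time over which the hyperbolic structure near the origin persists) shrinks to $\lambda^{-(1-\alpha)}T_0$, so the total Lagrangian stretching $|d\Psi_t|\le\exp\bigl(\int_0^t\|\nabla U_\lambda\|_{L^\infty}\bigr)$ is bounded by a fixed constant $e^{K_0}$ independent of $\lambda$. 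Hence $e^{s\sigma\lambda^{1-\alpha}t^*}\ge\ep^{-3}$ forces $t^*$ to exceed the background's guaranteed existence time, and in 3D there is no global (or long-time) hyperbolic background to fall back on --- this is exactly why the 2D arguments of C\'ordoba--Mart\'inez-Zoroa--O\.za\'nski and Jeong, which use globally defined 2D flows, do not transfer to $\RR^3$. A second, related gap: even on the short window, a Gronwall estimate for the remainder $w$ produces the factor $e^{C\lambda^{1-\alpha}t}$, which is of the same order as the claimed growth of $V$, so there is no separation between the signal and the error without extra structure; you acknowledge this but supply no mechanism to close it.

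The paper avoids both problems by a different design. The ``background'' (a toroidal swirl $\psi_\mu\e_\theta$) and the ``perturbation'' (an angularly oscillating vortex-ring part) are pieces of one anisotropic field supported in a thin torus; the leading dynamics is the \emph{exactly solvable} transport of the angular phase by the swirl, so the growth is algebraic (linear in $t$ times the frequency) rather than exponential, and the amplification over $[0,t^*]$ is a large but $\mu$-independent constant $\ep^{-N}$ with $N=N(s)$ chosen huge. The solution is then shown to survive for the time $t^*\sim\ep^{-N}\|\nabla u_0\|_{L^\infty}^{-1}$ --- much longer than the naive wellposedness timescale --- via a bootstrap in which every error term carries the anisotropic smallness $\mu^{-1}\nu\ll1$, propagated from $L^2$ to all $H^k$. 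If you want to salvage your approach you would need to replace the single rescaled background either by such an exactly transported anisotropic structure or by an iteration over infinitely many scales; as written, the key inequality $\|V(t^*)\|_{\dot H^s}\gtrsim\ep^{-3}\|V\|_{\dot H^s}$ is unobtainable.
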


Such a result is called ``norm inflation'' in the literature, where arbitrarily small data in some space $X$ can become arbitrarily large in $X$ in an arbitrarily short time. In particular, the data-to-solution map $ u_0 \mapsto u(t)$ is unbounded in $X \to C([0,T]; X )$.

Theorem \ref{thm:Hs} combined with the end-point $H^{ \frac{5}{2} }$ illposedness~\cite{MR3359050} of Bourgain-Li finds the thresholds $s=0$ and $s =  \frac{5}{2}$  sharp both from above and below: 
\begin{itemize}
\item  By the energy conservation of smooth solutions, the $ L^2$ norm is constant in time;

\item   By the classical result~\cite{MR0951744} of Kato-Ponce, local wellposedness of \eqref{eq:euler} holds in $H^s$ when $s > \frac{5}{2}$, with a bounded solution map $u_0 \mapsto u(t)$.
\end{itemize}

\begin{remark}
Our construction is  explicit and elementary, with the following additional properties.

 \begin{itemize}

\item The initial data is   axisymmetric with a large (the same order as the full velocity) swirl, but the solution remains smooth when the inflation occurs at $t = t^*$.

\item Near the critical time $t^* \ll 1$, for every $s'>0$ the norm $|u(t)|_{H^{s'}}$ has increased by a factor $\ep^{-N}$ for some large $N$ compared to the initial $|u_0|_{H^{s'}}$. However, a growth like \eqref{eq:thm_Hs_1} can only by seen for $s'\geq s$ since the initial data $ |u_0|_{H^{s'}} \ll \ep^{ N}  $ if $s' <s$.

\end{itemize}
\end{remark}

Next, we introduce the second result. This time we consider the equation of a viscous fluid, the 3D Navier-Stokes equation,
\begin{equation}\label{eq:NS}
\begin{cases}
\p_t u -\Delta u + u\cdot \nabla u + \nabla p = 0 &\\
\D u =0  & \\
u|_{t = 0} = u_ 0 &
\end{cases}
(t,x) \in [0,T] \times \RR^3
\end{equation}
where the added dissipation represents the internal friction in a fluid.  Compared to the inviscid case, \eqref{eq:NS} enjoys the following scaling invariance
\begin{equation}\label{eq:intro_scaling_NS}
\begin{cases}
u(t,x) \mapsto u_\l (t,x) : = \l u(\l^2 t,\l x ) &\\
u_0 ( x) \mapsto u_{0, \l } ( x) : = \l u_0 ( \l x ) .&
\end{cases}
\end{equation}

A Banach space $X$ is called critical if its norm $ |\cdot |_X$ is invariant under such a scaling. In particular, the Sobolev space $\dot H^s $ is critical when $s  = \frac{1}{2}$. Roughly speaking, \eqref{eq:NS} is locally wellposed in critical or subcritical spaces, modulo technical end-points cases. On the other hand, global existence (but not uniqueness) is known in $L^2$  for Leray-Hopf weak solutions~\cite{MR1555394,doi:10.1002/mana.3210040121}, where $L^2$ is supercritical with respect to the scaling \eqref{eq:intro_scaling_NS}.

It has been an open problem whether \eqref{eq:NS} is locally wellposed in $ H^s $ when $s< \frac{1}{2}$. Our result below provides a negative answer to this question. Very surprisingly, our method of proving Theorem \ref{thm:Hs} is robust enough that only minor modification is needed to handle the vicious case.

\begin{theorem}\label{thm:Hs_NS}
The 3D Navier-Stokes equation \eqref{eq:NS} is strongly illposed in $H^{s} (\RR^3)$ for any $0< s <  \frac{1}{2}  $  in the following sense.

Let   $0< s <  \frac{1}{2} $. For any $\ep>0$, there exists a vector field $ u_0 \in C_c^\infty(\RR^3) $ such that all of the following holds.
\begin{enumerate}
\item $u_0$ is divergence-free   and
\begin{equation}\label{eq:thm_Hs_NS_1}
| u_0 |_{H^{s}(\RR^3) } \leq \ep .
\end{equation}

\item There exists a time $0< t^* \leq  \ep   $ such that $ u \in C^\infty( [0,t^*] \times \RR^3)$ and
\begin{equation}\label{eq:thm_Hs_NS_2}
| u   (t^*) |_{\dot{H}^{s}(\RR^3) } \geq \ep^{-1} .
\end{equation}

\end{enumerate}

\end{theorem}

As in the inviscid case, both exponents $s = 0$ and $s = \frac{1}{2}$ are sharp:
\begin{itemize}

\item $L^2$ norm can only decrease due to the energy balance $ \frac{1}{2} |u(t)|_{L^2}^2 + \int_0^t |\nabla u (\tau)|_{L^2}^2 \,d\tau  = \frac{1}{2}| u_0 |_{L^2}^2 $

\item The 3D Navier-Stokes equation  is locally wellposed in $H^{s}$ if $s \geq \frac{1}{2}$.
\end{itemize}

We discuss our results and related background in more detail below.

\subsection{Background: the Euler case}

Given the extensiveness of the wellposedness theory for the Euler equations, we focus on the modern development and the illposedness perspective and refer interested readers to \cite{MR1867882,MR2768550} for historical accounts.

In the seminal work \cite{MR0951744}, Kato and Ponce established wellposedness in Sobolev spaces $W^{s,p}(\RR^d)$ for   dimensions $d \geq 2$, with  $ s > \frac{d}{p} + 1$  and $1 < p< \infty$.  The borderline $ s = \frac{d}{p} + 1$ is considered critical for the Euler equations as the embedding of $W^{s,p}$ into the Lipschitz class just fails. Further results in Besov spaces can be found in \cite{MR1664597,MR1717576,MR2097579,MR2072064}.

Whether the Euler equation is ill-posed in critical Sobolev spaces $W^{s,p} $ with $ s = \frac{d}{p} + 1$ was a long-standing problem. The first illposedness result dates back to \cite{MR0877643}, where DiPerna and Lions demonstrated norm inflation in $W^{1,p}(\TT^3)$ for $p<\infty$ using $2\frac{1}{2}$-dimensional shear flows examples. This construction was revisited in \cite{MR2610558} in a $C^\alpha(\TT^3)$ setting.

In the groundbreaking work \cite{MR3359050}, Bourgain and Li demonstrated strong illposedness of norm inflation and nonexistence when   $ s =  \frac{d}{p} + 1$, $p<\infty$. The work \cite{MR3359050} sparked a series of research activities on strong illposedness of the Euler equation in critical spaces \cite{MR3320889,MR4065655,MR3625192,MR3451386,MR4300224}. Remarkably, some observations in illposedness were used in Elgindi's ingenious singularity formation for   $C^{1,\alpha}$ solutions of 3D Euler equations~\cite{MR4334974}.  For recent developments in singularity formation, we refer readers to \cite{2210.07191,2305.05660}.

In addition to critical Sobolev spaces,  the Euler equation is also illposed in other spaces. We mention $C^k$ illposedness of Bourgain and Li \cite{MR3320889} and   Elgindi-Masmoudi~\cite{MR4065655}. Other types of mild illposedness were considered in early works  \cite{MR2566571,MR2606636}.

It is worth mentioning that the nonuniqueness of weak solutions constitutes another form of illposedness of the Cauchy problem. The convex integration technique, pioneered by De Lellis and Székelyhidi Jr. in the seminal work \cite{MR2600877,MR3090182}, has been used to construct weak solutions with ``unconventional'' behaviors in fluid dynamics~\cite{MR3866888,MR3898708,MR4198715,MR4462623,MR4601999}. However, these solutions exhibit very different characteristics from those aforementioned. See surveys \cite{MR3619726,MR4188806}.

The flourishing of illposedness results in critical Sobolev spaces raises the intriguing question of whether such phenomena are even possible in the supercritical regime, say $W^{s,p}$ for $ s <  \frac{d}{p} + 1$. For a long time, very little was known there, save for the $2\frac{1}{2}$-dimensional shear flows examples of DiPerna-Lions~\cite{MR0877643}, whose geometry is not sufficient to reach the full range of supercritical regime and also restricted to the periodic domain $\TT^3$ (as solutions would otherwise have infinite energy in $\RR^3$).

Very recently in \cite{2210.17458},  C\'ordoba, Mart\'inez-Zoroa, and Ożański constructed a class of global unique solutions of the 2D Euler equation for $1<s<2 $. Initially in $H^{s}$, these solutions leave $H^{s - \ep}$ for some $\ep(s)>0$ for all $t>0$, exhibiting a sudden loss in their Sobolev regularity. See also~\cite{MR4309824} of Jeong for losing smoothness continuously.  The methods in \cite{MR4309824,2210.17458} heavily rely on the 2D transport structures (the Yudovich theory), and it is not clear whether they can be applied to the 3D Euler equations.

From the preceding discussion, it is evident that there has not been a supercritical Sobolev illposedness of the 3D Euler equation in $\RR^3$ thus far. Theorem \ref{thm:Hs} not only is the first instance of such a finding it also attains the sharp end-point case of $L^2$, where norm inflation is impossible by the conservation of energy.

 \subsection{Background: the Navier-Stokes case}

Shifting our focus to the viscous case, we discuss Theorem \ref{thm:Hs_NS} within the context of recent advancements for the Navier-Stokes equations.

By scaling \eqref{eq:intro_scaling_NS}, the following spaces are critical
\begin{equation}
\dot{H}^{\frac{1}{2}} \subset L^3 \subset \dot B^{-1 + \frac{3}{p}}_{p, \infty } \,\,, p<\infty   \subset  BMO^{-1} \subset  \dot B^{-1}_{\infty, \infty } .
\end{equation}
Wellposedness in $H^{\frac{1}{2}}$ dates back to the seminar work of Fujita-Kato \cite{MR166499}. Subsequent developments in $L^3$  \cite{MR760047,MR1617394} and in Besov spaces \cite{Cannone1993-1994} culminated with Koch-Tataru's $BMO^{-1}$ theorem \cite{MR1808843}. The wellposedness theory of \eqref{eq:NS} has been an enormous field of research,  too extensive to be fully encapsulated here, and we direct the interested readers to \cite{MR3469428,MR2768550}.

There are few illposedness results for the Navier-Stokes equations compared to the inviscid case. The pioneering work of Bourgain-Pavlovi\'{c} \cite{MR2473255} proved the norm inflation in $\dot{B}^{-1}_{\infty, \infty}$, the largest critical space. This leads to subsequent developments \cite{MR2601621,MR3276597}. See \cite{MR2566571,MR4007200}  for mild illposedness in Besov spaces.

Remarkably, these the norm inflation results near $B^{-1}_{\infty, \infty}$ with integrability index $p=\infty $ are the only ones possible--- it is known that the solution map is continuous for small data in $B^{-1 + 3/p }_{p, \infty}$, see for instance~\cite[Theorems 5.27]{MR2768550}. It explains the scarcity of illposedness result in the viscous case when compared to the  Euler equations---the later equations were illposed on the whole line of $ s = \frac{d}{p} + 1$.

By comparison, our Theorem \ref{thm:Hs_NS} marks the first illposedness result of 3D Navier-Stokes in a Sobolev space, regardless of the criticality regime. Our construction shows a distinct transition for  initial data in $H^{s}$ with $s \geq \frac{1}{2}$ or $s < \frac{1}{2}$.

On the technical side, our approach differs significantly from \cite{MR2473255}, which is based on the second iterate of the mild formulation and the backward energy cascade on the Fourier side. Instead, our method hinges on the first-order approximation by the transport dynamics, as we elucidate in the subsequent discussion.

\subsection{Main ideas}

Our constructions here are inspired by   \cite{2210.17458} and our early work~\cite{MR4462623,MR4462623} and build upon two key observations.

Firstly, non-Lipschitz vector fields can induce a loss of regularity in linear transport equations, as demonstrated in~\cite{MR3933614,MR4377866,MR4381138,MR4430388}. Secondly,  anisotropy can bring desired smallness in the system, rooted in recent constructions in convex integration \cite{MR3898708,MR4462623}.

These two observations prompt the design of an anisotropic vector field for the systems \eqref{eq:euler} or \eqref{eq:NS} such that the leading order dynamics solely constitute a linear transport. The upshot is that in a suitable anisotropic setting, we can treat all other nonlinear interactions (and the dissipation of \eqref{eq:NS}) as errors, at least for a brief duration. The main task is to show this timescale is adequate for the non-Lipschitz transport to produce norm inflation.

A major departure from previous illposedness arguments for Euler equations is that we work in the  \emph{velocity} formulation instead of the vorticity formulation where $\om $ is transported by the flow in 2D (or $\frac{\om}{r}$ for 3D axisymmetric Euler \cite{MR3359050,MR3320889}). The velocity formulation appears to be necessary to reach the threshold $L^2$.

\subsection{Outline of construction}

We now explain the specifics of the construction. In a nutshell, we use a vortex ring  to  transport a toroidal rotational flow   in cylindrical coordinates. All of them are supported in a very small toroidal domain with the diameter of the cross-section much smaller than the distance to the origin.

 In this anisotropic setup the 3D axisymmetric Euler is approximated by the stationary 2D Euler  of the vortex ring plus a linear transport of the swirl component.  The swirl component exhibits a large linear growth of  oscillations, induced by the toroidal rotation of the vortex ring. We then encode these dynamics into an approximate solution $\overline{u}$ that is divergence-free, supported in a small ring, and solves the 3D Euler equation with a small error, due to its inherent anisotropy.  See  Proposition \ref{prop:approximate} for details.

The approximate solution experiences rapid norm inflation owing to its non-Lipschitz transport dynamics. For $s$ close to $\frac{5}{2}$ in the inviscid case or $\frac{1}{2}$ in the viscous case, nearly full 3-dimensional concentration is vital. Otherwise, if the initial data were situated in the sub-critical regime, no instantaneous norm inflation would occur. 

To close the argument, we need to show the following two key points:
\begin{itemize}
\item First, the approximation $\overline{u}$ remains proximate  (in $\dot H^s$) to the exact solution $u$ until the onset of inflation;  

\item Second, the exact solution $u$ with the same initial data does not blow up in  $\dot H^{s'}$ for   $s' > s$.
\end{itemize}

In general, maintaining control over a solution in the supercritical regime is very challenging. For both the viscous and inviscid cases, existing wellpossedness theories~\cite{MR0951744,MR763762} suggest a timescale $ T \sim |   u_0 |_{W^{1+ ,\infty}}^{-1}$ within which  a smooth solution can be controlled. However, in our setting, this timescale is too short, as the approximate solution $ \overline{u} $ has not grown sufficiently within that duration. 

The remedy to this issue is still anisotropy in the approximate solution $\overline{u}$. Thanks to the smallness of the source error term, $\overline{u}$ approximates the exact solution $u$ very well on a timescale much larger than $|\nabla  u_0 |_{L^\infty}^{-1}$. This timescale $t^{*}$ defined in \eqref{eq:def_critical_t*} is the onset of norm inflation, during which one can show $|u-\overline{u} |_{L^2} $ is small by standard energy estimates. Then we use a bootstrap argument to transfer the smallness of $|u-\overline{u} |_{L^2}$  to higher Sobolev norms. This bootstrap argument allows us to not only   rule out a potential blowup of $u$ but also  obtain proximity of $u$ to $ \overline{u}$ in $\dot H^s$  before the onset of norm inflation. We refer further technical details to Section \ref{sec:no_blowup_proof}.

To handle the viscous case, a scaling argument suggests that the critical time for the norm inflation must be shorter than the dissipation timescale (the inverse of two spatial derivatives). This condition is compatible with our construction, provided that $s< \frac{1}{2}$. In this setup, we treat the dissipation term $ -\Delta u$ as an error, and hence the leading order dynamics of the Navier-Stokes solution is governed by the Euler dynamics.

\subsection{Further comments}

In the inviscid case, one should be able to modify the construction to show the norm inflation in supercritical spaces  $   {W^{k,p} }  $   for  $0< k < -1 + \frac{3}{p}   $. While we do not see a clear obstruction in extensions beyond the $H^s$ space other than technical intricacies, we choose to adhere to the $H^s$ space for the sake of simplicity and clarity.

Our argument does not imply the nonexistence of $L^\infty_t H^s$ solutions for such initial data $u_0$. Achieving sufficient control in this supercritical regime  poses significant challenges, given that the uniqueness in a supercritical regimes is generally not expected for both inviscid~\cite{MR4649134} and viscous cases \cite{MR4462623,2112.03116}

In the viscous case, although the wellposedness theory may not distinguish significantly between $L^p$ and $H^s$ cases, there are drastic differences between their norm inflation examples. Growth in $H^s$ can sorely arise from the development of oscillations as shown in this paper. In contrast, generating growth in $L^p$ necessitates the generation of large spatial concentration, a task that appears to be much more challenging for incompressible fluids.

\subsection*{Organization of the paper}
The rest of the paper is organized as follows.

After some preliminaries in Section \ref{sec:pre}, we give the construction of the initial data and the approximate solution in Section 3 for the 3D Euler case and prove   Theorem \ref{thm:Hs}  in Section \ref{sec:no_blowup_proof}. The viscous case is almost identical, so in Section \ref{sec:NS_proof} we only sketch the necessary changes needed to prove Theorem \ref{thm:Hs_NS}.

\section{Preliminaries}\label{sec:pre}

\subsection{Notations}

All norms in this paper are taken on $\RR^3$, so we use $|\cdot |_{L^p } |\cdot |_{H^s } $ and $ |\cdot |_{W^{k,p} }$ for brevity. For a vector or tensor value function $f$, $|f|$ denotes its modulus, the square root of the sum of squares of each component.

We recall the following definition of Sobolev spaces $  {  W^{s, p }}  $ for real $s \in \RR $ and $1 <  p < \infty $
\begin{equation}
|f  |_{  W^{s, p }}  =  | J^s f |_{L^p }
\end{equation}
where $J^s$ is the Bessel potential given by the Fourier multiplies $ \widehat{J^s} = (1 + |\xi|^2)^{ \frac{s}{2}}$.

When $s \in \NN$ is an integer,
\begin{equation}
|f |_{   W^{s, p }  }  \simeq   \sum_{0 \leq  k \leq s} |\nabla^k f|_{L^p  }
\end{equation}
and we have the unusual convention $H^{s} : =  W^{s, 2 }$. 

We also use the homogeneous Sobolev norm
\begin{equation}
|f |_{\dot H^s  }  =  \Big(\int_{\RR^3} |\xi|^{2s} | \widehat{f} (\xi) |^2 \, d\xi \Big)^\frac{1}{2}
\end{equation}
where $\widehat{f}$ denotes the Fourier transform in $\RR^3$.

For two quantities $X,Y$, we write $X \lesssim Y$ if $X \leq C Y$ holds for some constant $C >0$, and similarly $X \gtrsim Y$ if $X \geq C Y$, and $X \sim Y$ means $X \lesssim Y$ and $X \gtrsim Y $ at the same time.

In addition, $X \lesssim_{a,b,c,\dots} Y$ means $X \leq C_{a,b,c\dots} Y$ for a constant $C_{a,b,c\dots}$ depending on parameters $a,b,c\dots$.

\subsection{Tools}

Our main tool is the following result, proved by Kato and Ponce in \cite[Propostion 4.2]{MR0951744}.

\begin{proposition}\label{prop:kato_ponce}
Let $d\geq2$ and $u$ be a smooth solution of \eqref{eq:euler} or \eqref{eq:NS} on $[0,t_0]$ for some $t_0>0$ such that $ |\nabla u   |_{L^\infty( [0,t_0] ;L^\infty) } \leq M$ for some constant $M \geq 1$.

Then for any real number $k \geq 0$ and $1 < p< \infty$,
\begin{equation}\label{eq:prop_kato_ponce}
|u(t)|_{ {W}^{k,p}  }  \leq    |u_0 |_{  {W}^{k,p}  } e^{C_{k,p} M t } \quad \text{for all $t \in [0, t_0 ]$}
\end{equation}
for universal constants $C_{k,p} $ independent of $M$, $ t_0$ and $t$.
\end{proposition}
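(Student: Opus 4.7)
The plan is to run a standard $L^p$ energy estimate on the equation satisfied by $J^k u$. Applying the Bessel multiplier $J^k$ to either \eqref{eq:euler} or \eqref{eq:NS} yields
\[
\p_t J^k u + J^k(u\cdot \nabla u) + \nabla J^k p = \nu \, \Delta J^k u,
\]
with $\nu = 0$ in the Euler case and $\nu = 1$ in the Navier--Stokes case. I would pair this equation against $|J^k u|^{p-2} J^k u$ and integrate over $\RR^3$, producing $\tfrac{1}{p}\tfrac{d}{dt}|J^k u|_{L^p}^p$ on the left and three terms to control: a transport term, a pressure term, and (when $\nu = 1$) a dissipation term.

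For the transport term, I would split $J^k(u\cdot \nabla u) = u\cdot \nabla J^k u + [J^k,\, u\cdot \nabla] u$. The first piece, after pairing with $|J^k u|^{p-2}J^k u$, becomes the total-derivative integral $\tfrac{1}{p} \int u\cdot \nabla |J^k u|^p \, dx$, which vanishes after one integration by parts thanks to $\D u = 0$. For the commutator I would invoke the classical Kato--Ponce commutator estimate to get
\[
\big| [J^k,\, u\cdot \nabla] u \big|_{L^p} \lesssim_{k,p} |\nabla u|_{L^\infty}\, |J^k u|_{L^p} \leq C_{k,p}\, M\, |J^k u|_{L^p},
\]
so that H\"older's inequality bounds its contribution by $C_{k,p} M |J^k u|_{L^p}^p$. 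To handle the pressure cleanly for $p \neq 2$, I would instead apply the Leray projection $P$ to the original equation before pairing; since $P$ and $J^k$ are both Fourier multipliers they commute, and $Pu = u$, giving $\p_t J^k u + P J^k(u\cdot \nabla u) = \nu \Delta J^k u$. One can then move $P$ onto $|J^k u|^{p-2} J^k u \in L^{p'}$ using the $L^{p'}$-boundedness of $P$, reducing matters to the same transport/commutator bounds as above. The case $p=2$ is even simpler, as $\int \nabla J^k p \cdot J^k u \, dx = 0$ follows directly from integration by parts against $\D u = 0$. In the Navier--Stokes case, the dissipation term carries a favorable sign by the standard $L^p$-convexity property of the heat flow and can simply be discarded.

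Combining these ingredients gives the differential inequality $\tfrac{d}{dt}|J^k u|_{L^p} \leq C_{k,p} M |J^k u|_{L^p}$, and Gronwall's lemma yields exactly \eqref{eq:prop_kato_ponce}. The main technical obstacle is the Kato--Ponce commutator bound itself: for general real $k$ it requires a Littlewood--Paley/paraproduct decomposition and a delicate analysis of low--high and high--low frequency interactions, which is precisely the content of the original Kato--Ponce proof in \cite{MR0951744} that we invoke here.
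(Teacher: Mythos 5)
The paper does not actually prove this proposition; it quotes it from Kato--Ponce \cite{MR0951744}, and your overall strategy --- apply $J^k$, run an $L^p$ energy estimate, kill the pure transport term via $\D u=0$, bound the commutator $[J^k,u\cdot\nabla]u$ by the Kato--Ponce commutator estimate, and discard the dissipation by the $L^p$-dissipativity of the Laplacian --- is precisely the argument of the cited result. The transport, commutator, and dissipation steps are fine.

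The pressure term for $p\neq 2$, however, is handled incorrectly as written. After Leray projection the problematic term is $\int PJ^k(u\cdot\nabla u)\cdot w\,dx$ with $w=|J^ku|^{p-2}J^ku$. If you move $P$ onto $w$ by self-adjointness, the key cancellation $\int (u\cdot\nabla J^ku)\cdot w\,dx=\tfrac1p\int u\cdot\nabla|J^ku|^{p}\,dx=0$ is destroyed, since it requires the test function to be exactly $w$ and not $Pw$; the leftover $\int (u\cdot\nabla J^ku)\cdot(P-\Id)w\,dx$ carries a full extra derivative on $J^ku$ and is not controlled by $\|J^ku\|_{L^p}$ without a further integration-by-parts argument that you have not supplied. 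If instead you split $P=\Id+(P-\Id)$, the $(P-\Id)$ piece is exactly $-\nabla J^kp$, so nothing has been gained, and the naive bound $\|\nabla J^kp\|_{L^p}\lesssim\|J^k(u\cdot\nabla u)\|_{L^p}$ again loses a derivative through $u\cdot\nabla J^ku$. The standard repair --- and what Kato--Ponce actually do --- is to use $\D u=0$ twice to get $-\Delta p=\p_iu_j\,\p_ju_i=\Tr\big((\nabla u)^2\big)$, so that $\nabla J^kp$ is an order $(-1)$ operator applied to $J^k$ of a product of \emph{first derivatives} of $u$; the Kato--Ponce product (Moser) estimate then yields $\|\nabla J^kp\|_{L^p}\lesssim\|\nabla u\|_{L^\infty}\|u\|_{W^{k,p}}\leq M\|u\|_{W^{k,p}}$, which is what the Gronwall step needs. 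With the pressure treated this way (the $p=2$ case you give is correct as is), your proof closes.
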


Note that the failure of \eqref{eq:prop_kato_ponce} at $p= \infty$ is not an artifact---the equation is illposed at endpoint integer $C^k$ spaces $k \geq 1 $~\cite{MR3320889,MR4065655}. Such a failure introduces some technical issues in our estimates in Lemma \ref{lemma:bootstrap_Hk}, which we handled using the smallness from anisotropy.

We will also use the following standard Gronwall's inequality. 
\begin{lemma}
Let $\eta(t)$ be a non-negative, absolutely continuous function on $[0,T]$ satisfying
\begin{equation}
\frac{d}{d  t} \eta(t) \leq \alpha(t) \eta(t) + \beta(t)
\end{equation}
for nonnegative, summable functions $\alpha(t) $ and  $\beta(t)$ on $[0,T]$. Then
\begin{equation}
\eta(t)  \leq e^{\int_0^t \alpha (\tau ) \, d\tau } \Big( \eta(0) + \int_0^t \beta(\tau)\, d\tau \Big).
\end{equation}
\end{lemma}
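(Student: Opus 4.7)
The plan is to use the classical integrating factor trick. Define
\[
\mu(t) := \exp\!\Big(-\int_0^t \alpha(\tau)\,d\tau\Big),
\]
which is absolutely continuous, positive, and satisfies $\mu' = -\alpha \mu$ almost everywhere. Since $\alpha \in L^1$ and $\eta$ is absolutely continuous, the product $\mu\eta$ is absolutely continuous, and I would begin by computing its derivative via the Leibniz rule:
\[
\frac{d}{dt}\bigl(\mu(t)\eta(t)\bigr) = \mu(t)\eta'(t) - \alpha(t)\mu(t)\eta(t).
\]

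Next, I would multiply the hypothesis $\eta'(t) - \alpha(t)\eta(t) \leq \beta(t)$ by $\mu(t) > 0$, which preserves the inequality and yields
\[
\frac{d}{dt}\bigl(\mu(t)\eta(t)\bigr) \leq \mu(t)\beta(t) \qquad \text{a.e. on }[0,T].
\]
Integrating this differential inequality from $0$ to $t$ (using absolute continuity so that the fundamental theorem of calculus applies) gives
\[
\mu(t)\eta(t) - \eta(0) \leq \int_0^t \mu(\tau)\beta(\tau)\,d\tau.
\]

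Finally, I would divide by $\mu(t)$, i.e.\ multiply by $\mu(t)^{-1} = \exp\!\bigl(\int_0^t\alpha\bigr)$, to isolate $\eta(t)$. To reach the stated form, I would use $0 < \mu(\tau) \leq 1$ (which holds since $\alpha \geq 0$) to bound $\mu(\tau)\beta(\tau) \leq \beta(\tau)$ inside the integral, producing
\[
\eta(t) \leq e^{\int_0^t \alpha(\tau)\,d\tau}\Big(\eta(0) + \int_0^t \beta(\tau)\,d\tau\Big).
\]

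The argument is entirely routine; the only mild care needed is to justify the a.e.\ manipulations for absolutely continuous (as opposed to $C^1$) functions, which is standard given that $\alpha,\beta \in L^1$. I do not anticipate a real obstacle here.
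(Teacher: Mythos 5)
Your integrating-factor argument is correct and complete; the paper states this Gronwall lemma as a standard fact and provides no proof of its own, so there is nothing to diverge from. The measure-theoretic points you flag (absolute continuity of $\mu\eta$, a.e.\ product rule, and the fundamental theorem of calculus for absolutely continuous functions) are exactly the right ones, and your use of $\mu(\tau)\leq 1$ to pass to the stated (slightly weaker) form of the conclusion is fine.
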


\section{The approximate solution}\label{sec:overline_u}

In this section, we initiate the proof of Theorem \ref{thm:Hs} and construct the initial data $ u_0$ and an approximate solution $\overline{u}$ of the 3D Euler equation. The construction relies on  two parameters  whose values we fixed in Section \ref{sec:no_blowup_proof} depending on the given $\ep>0$.

The main goal here is to show that the approximate solution $\overline{u}$ does develop the desired $\dot{H}^s$-norm inflation at the critical time $t^* \ll \ep $ defined in \eqref{eq:def_critical_t*} below. In the next section, we will show that the approximate solution $\overline{u}$ also stays close to the exact solution $u$ at least until $t^*$.

\subsection{Auxiliary coordinates}

The construction will be given in the cylindrical coordinate centered at the origin with $(x_1, x_2 ,x_3) \mapsto (\theta,r,z )$ defined by
\begin{equation}
x_1 = r \cos \theta , \quad x_2 = r \sin \theta ,\quad x_3 =z 
\end{equation}
and the associated orthonormal frame
\begin{equation}
\et = (   - \sin \theta ,  \cos \theta, 0 )  \quad \er = (   \cos \theta  ,  \sin \theta , 0) \quad  \ez = (0,0,1) .
\end{equation}

For any vector field $v:\RR^3\to  \RR^3$, we denote its cylindrical components by $ v = v_\theta \et  + v_r \er +  v_z \ez $. We say a function $f :\RR^3\to  \RR $ is axisymmetric if it does not depend on $\theta$, and similarly for a vector-valued function $v :\RR^3\to  \RR^3$  if its components $ v_\theta, v_r ,v_z $ are axisymmetric.

Throughout the paper, for $k\in \NN$, $ \nabla^k $ refers to the full gradient in $\RR^3$. By an induction argument and the identity $\nabla f = \p_r f \er + \p_z f \ez $,  for any axisymmetric $f: \RR^3 \to \RR $ we have the following standard point-wise bounds for $k\in \NN$
\begin{equation}\label{eq:diff_r_z}
|\nabla^k f (x)|\lesssim_k \sum_{ 0\leq i  \leq  k}   | \p_r^{ i} \p_z^{k-i} f (x)|.
\end{equation}

We also work with  a shifted polar coordinate in the $rz$-plane (see Figure \ref{fig:schematic}) centered at the point $(r,z) = (\nu^{-1}  ,0  ) $ for some   $\nu\geq 1 $
\begin{equation}\label{eq:def_rho_varphi}
r = \nu^{-1}  + \rho \cos(\varphi ), \quad z = \rho \sin(\varphi ).
\end{equation}
Here $\nu$ is a large parameter whose value we will fix in Section \ref{sec:no_blowup_proof} depends on $\ep>0$, see also \eqref{eq:def_nu_lambda} below.

We also note the following useful point-wise bounds: if $f :\RR^3 \to \RR  $ is axisymmetric, then for $ k \in \NN$
\begin{equation}\label{eq:diff_rho_varphi}
|\nabla^k f (x)|\lesssim_k \sum_{ 0\leq i ,j \leq  k} \frac{1}{\rho^{k - i - j }} | \p_\rho^{ i} \p_\varphi^j f (x)| \quad \text{when $|\rho| >0$}
\end{equation}
which can be proved by passing first to \eqref{eq:diff_r_z} and then use induction in the $rz$-plane.

We use the convention that if a function $f$ is defined by variables $r,z,\rho,\varphi $, we use the same letter $f$ to indicate its Euclidean counterpart $\RR^3\to \RR$ defined implicitly by these variables and vice versa.

It is worth emphasizing that all the norms $W^{k,p}$ or $H^s$ appearing below are taken in the original Euclidean variable of $\RR^3$, and we only use variables $(\theta,r, z)$ and $(\rho , \varphi )$ to simply the notations.

\subsection{The setup}\label{subsec:setup}

The initial data $u_0$ will be obtained from translating and rescaling some fixed basic building blocks by some large frequency parameters.

Let $\mu \geq 1 $ be a large parameter and we set
\begin{equation}\label{eq:def_nu_lambda}
\begin{cases}
b  =  \frac{  \frac{5}{2}  - s }{ 100 } >0 &\\
 \nu   = \mu^{1- b }   .&
\end{cases}
\end{equation}

The major frequency parameter $\mu  \gg \nu $ represents the magnitude of a   derivative, while   $\nu^{-1}$ is the scale of the distance to the origin.

The value of $\mu$ is fixed till the very end of the proof depending on $\ep>0$ and other universal constants. Effectively, in the construction we have the freedom to tweak two parameters $\ep > 0$ small and $\mu \geq 1$ large.

For the 3D Euler equation, the choice of $b>0$ in \eqref{eq:def_nu_lambda} ensures   $  t^* \to 0$ in \eqref{eq:def_critical_t*} for $s< \frac{5}{2}$. All other results in this section hold for any $b>0$. In Section \ref{sec:NS_proof}, we will choose a different $b (s) >0$ for the viscous case.

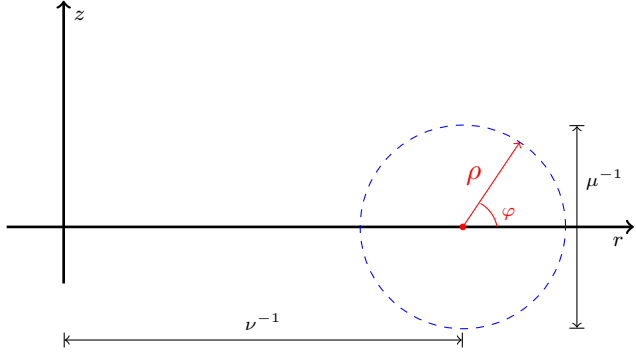
\begin{figure}[ht]
 \centering
\begin{tikzpicture}[scale=1.5]

\draw[->][line width=1]  (-0.5,-0.5) -- (-0.5,2);
\draw  (-0.5,2) node[anchor=north west] {\scriptsize  $z$};
\draw[->][line width=1] (-1,0) -- (4.5,0);
 \draw (4.5,0)  node[anchor=north east] {\scriptsize $r$};

\draw[->][color= red  ] (3,0) -- (3.5,0.75);
\draw  (3.25,0.45) node[anchor= east] {  $\color{red} \rho$  };

\filldraw[color= red  ]  (3,0)  circle (0.7pt); 

 \draw[|<->|] (-0.5,-1)--++ (3.5,0) node[pos=0.5,above]{\tiny $\nu^{-1}$};

 \draw[|<->|] (4,-0.9)--++ (0,1.8); 

\draw (4.5, 0.6) node[anchor=north east]{\tiny $\mu^{-1}$};

 \draw[blue  ,dashed] (3,0) circle (0.9cm);

\draw[color= red  ] ( 3.3,0) arc(10:60:0.3) node[pos=0.5,right]{\tiny $\varphi$};;

\end{tikzpicture}
\caption{Schematic of the shifted polar coordinate $(\rho , \varphi)$ in $rz$-plane. The scale of the profiles is $\mu^{-1}$ that is much smaller than $\nu^{-1}$, the distance to the origin. }
\label{fig:schematic}
\end{figure}

Using the large parameters $\mu,\nu  \geq 1 $ and the shifted polar coordinate $(\rho, \varphi)$ (see Figure \ref{fig:schematic}), we define 
\begin{equation}\label{eq:def_u_0_components}
\begin{aligned}
    {u}_{0, \theta} (   z ,   r  ) &= \ep^2 \mu^{1 -s } \nu^{ \frac{1}{2}   } g (  \mu \rho )\sin(\varphi )     \\
    {u}_{0, r} (   z ,   r  ) &=- \ep^2 \mu^{1 -s } \nu^{ \frac{1}{2}   } f'  (  \mu  \rho  )  \p_z \rho    \\
    {u}_{0, z} (   z ,   r  ) &=   \ep^2 \mu^{1 -s } \nu^{ \frac{1}{2}   }   f '   (  \mu \rho  )   \p_r \rho 
\end{aligned}
\end{equation} 
where the profiles $f,g \in C^\infty_c(\RR )$ are fixed throughout the construction such that
\begin{equation}\label{eq:def_profiles}
\begin{cases}
  f '  (   \rho  )   =   1 \quad \text{for $ 1\leq \rho  \leq \frac{3}{2}$} & \\
  \Supp f   \subset \{  \frac{1}{2}\leq \rho  \leq 2 \}    & \\
 \Supp g \subset \{ 1\leq \rho  \leq \frac{3}{2} \} .
\end{cases}
\end{equation}

The key point is that the vector field $ \RR^2 \ni ( {u}_{0, r} , {u}_{0, z}        ) = C_{\ep,\mu} \nabla^{\perp} (f (\mu \rho) )  $ is a stationary solution of the 2D Euler equation on the $rz$-plane.

Precisely,  there exists a  pressure $p_0(r,z)$, smooth in $(r,z)$ and constant outside the support of $( {u}_{0, r} , {u}_{0, z}        )$, such that
\begin{equation}\label{eq:stationary_2Deuler}
\begin{cases}   
\big(  {u}_{ 0, r} \p_r    +  {u}_{0,  z} \p_z  \big){u}_{ 0, r } + \p_r {p}_0  = 0 & \\
 \big(  {u}_{ 0, r} \p_r    +  {u}_{0,  z} \p_z  \big)  {u}_{0,  z}  +  \p_z  {p}_0     = 0&\\
\p_r  {u}_{0, r} +  \p_z {u}_{0, z }= 0 &
\end{cases}
\quad\text{in $(r,z) \in \RR^2$} .
\end{equation}

We then define the initial data $u_0 :\RR^3 \to \RR^3 $ in cylindrical coordinates using \eqref{eq:def_u_0_components}:
\begin{equation}\label{eq:def_u_0}
\begin{aligned}
 {u}_0 & = {u}_{0, \theta } \et  +  {u}_{0, r} \er +  {u}_{0, z} \ez +  u_c \ez 
\end{aligned}
\end{equation}
where $u_c \ez  $ is a divergence corrector defined by 
\begin{equation}\label{eq:def_u_c}
u_c : =    \ep^2 \mu^{  -s } \nu^{ \frac{1}{2}   }  \frac{f     (  \mu \rho  ) }{r}      .
\end{equation}
We check that $u_0:\RR^3 \to \RR^3$ indeed has zero divergence:
\begin{equation}\label{eq:u_0_zero_div}
\begin{aligned} 
\D u_0 & = \D \big(  {u}_{0, r} \er +  {u}_{0, z} \ez +  u_c \ez \big) \\
&=  \p_r  {u}_{0, r}   + \frac{ {u}_{0, r} }{r} + \p_z {u}_{0, z}  +   \p_z u_c     \\
&=   \frac{ {u}_{0, r} }{r}  +   \p_z u_c  =  \ep^2 \mu^{1 -s } \nu^{ \frac{1}{2}   } \Big(  -\frac{ f'  (  \mu  \rho  )  \p_z \rho  }{r}      +  \frac{ f'  (  \mu  \rho  )  \p_z \rho  }{r}   \Big) \\
& = 0 .
\end{aligned}
\end{equation}

\begin{remark} 
  The initial data  ${u}_{0} $ is axisymmetric in $\RR^3$, essentially consisting of a vortex ring and a rotational flow,  supported inside a small torus with distance to origin $\sim \nu^{-1}$ and its cross-section having a diameter $ \sim \mu^{-1} $.

\end{remark}

\subsection{Basic estimates}
Observe that   \eqref{eq:def_u_0_components} and the fact that $ \nu \ll \mu$ imply the objects ${u}_{0, \theta } , {u}_{0, r } , {u}_{0, z }, u_c  $ are supported near a ring of radius $r \sim \nu^{-1}$ in $\RR^3$. More precisely,
\begin{equation}\label{eq:u_0_support}
\begin{cases}
\Supp {u}_{0 } \subset \mathcal{R}_{\mu}\subset \RR^3  & \\
\mathcal{R}_{\mu} : = \{ x \in \RR^3 : \dist(x, \mathcal{C}_{\nu} ) \leq 4 \mu^{-1}  \} & \\
\mathcal{C}_{\nu} : = \{x \in \RR^3 : r =\nu^{-1}\; \text{and}\; z=0  \}.
\end{cases}
\end{equation}

Using \eqref{eq:diff_rho_varphi} and the bounds $| \nabla^k ( r^\alpha ) | \lesssim r^{\alpha -k}   $, we have   the following estimates in $\RR^3$
\begin{equation}\label{eq:est_u_0_components_theta_r}
\begin{aligned}
  \nu^{\alpha} | r^\alpha f |_{W^{k,p}(\RR^3)} & \lesssim  \ep^2 \mu^{ k -s }    \Big( \mu^{1 - \frac{2}{p} } \nu^{\frac{1}{2} - \frac{1}{p} }  \Big)  \quad \text{for}\quad  f\in \{ {u}_{0, \theta } , {u}_{0, r }   \}  
\end{aligned}
\end{equation} 
and
\begin{equation}\label{eq:est_u_0_components_z}
\begin{aligned}
 |  {u}_{0, z } |_{W^{k,p}(\RR^3)} & \lesssim  \ep^2 \mu^{ k -s }   \Big( \mu^{1 - \frac{2}{p} } \nu^{\frac{1}{2} - \frac{1}{p} }  \Big)  \\
 |   {u}_{c}  |_{W^{k,p}(\RR^3)} & \lesssim  \ep^2 \mu^{ k -s }  (\mu^{-1} \nu)  \Big( \mu^{1 - \frac{2}{p} } \nu^{\frac{1}{2} - \frac{1}{p} }  \Big)  
\end{aligned}
\end{equation} 
where  the parameters $  k \in \NN $, $1 \leq p \leq \infty$, $\alpha \in \RR$ and  the constants do not depend on $\ep,\mu $ or $\nu$.

We emphasize again that all Sobolev or Lebesgue norms in this paper are taken in $\RR^3$ and hence we will not spell them out.

\begin{lemma}\label{lemma:estimates_u_0}
The vector field ${u}_0:\RR^3 \to \RR^3$ defined by  \eqref{eq:def_u_0} is smooth, divergence-free, and compactly supported around the origin with support of size $\sim \mu^{-2} \nu^{-1}$.

In addition, for any $k\geq 0$ and $1\leq p\leq \infty$, ${u}_0$ satisfies   the estimates
\begin{equation}\label{eq:lemma_u_0_1}
 |{u}_0 |_{W^{k,p}  } \leq C_{k,p} \ep^2 \mu^{ k -s  }  \Big( \mu^{1 - \frac{2}{p} } \nu^{\frac{1}{2} - \frac{1}{p} }  \Big) ,
\end{equation}
where the constant $C_{k,p} > 0 $ is independent of $\ep, \mu$ and $\nu$.

In particular, there exists a universal constant $\ep_0   \in (0,1)$ such that for any $0<\ep \leq \ep_0$, 
\begin{equation}\label{eq:lemma_u_0_2}
 |{u}_0|_{H^s  } \leq \ep  .
\end{equation}

\end{lemma}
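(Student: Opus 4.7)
The plan is to verify the three claims in order: (i) smoothness, compact support, and incompressibility; (ii) the scale-tracking bound \eqref{eq:lemma_u_0_1}; and (iii) the smallness \eqref{eq:lemma_u_0_2}.

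Item (i) is bookkeeping. Since $\phi,\phi_1 \in C^\infty_c([-1,1]\times[\tfrac12,2])$, each rescaled scalar profile is $C^\infty$ and supported in the toroidal shell $\{|z|\leq \mu^{-1},\; \nu r \in [\tfrac12,2]\}$; in particular the supports remain bounded away from the axis $\{r=0\}$, where $\et,\er$ and $\sin(\l\theta)$ would otherwise be singular in Cartesian coordinates. Hence $u_0 \in C^\infty_c(\RR^3)$. For $\nabla\cdot u_0 = 0$, the $\psi_\mu\et$ term contributes $\tfrac{1}{r}\partial_\theta \psi_\mu = 0$ because $\psi_\mu$ is $\theta$-independent, while the remainder is $\tfrac{\sin(\l\theta)}{(\l\nu)^s}\bigl[\partial_z\chi_\mu + \tfrac{1}{r}\partial_r(r\rho_\mu)\bigr]$; pulling out the prefactors of \eqref{eq:def_profiles} and changing variables to $(\tilde z,\tilde r)=(\mu z,\nu r)$ reduces this bracket to the assumed compatibility between $\phi$ and $\phi_1$.

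For (ii), I would handle each scalar component of $u_0$ separately. The basic building block is the anisotropic rescaling estimate: if $g\in C^\infty_c(\RR^2)$ is supported in $[-1,1]\times[\tfrac12,2]$ and $f(x):=g(\mu z,\nu r)$, then the cylindrical change of variables $(\tilde z,\tilde r)=(\mu z,\nu r)$ in $r\,dr\,dz\,d\theta$ gives $|f|_{L^p(\RR^3)} \sim (\mu\nu^2)^{-1/p}|g|_{L^p(\RR^2)}$; each $z$-derivative then costs a factor of $\mu$, each $r$-derivative a factor of $\nu\leq\mu$, and any $\theta$-derivative landing on $\sin(\l\theta)$ becomes $\l/r\sim\l\nu\sim\mu$ by \eqref{eq:def_nu_lambda}. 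Combining this with the prefactors in \eqref{eq:def_profiles} and the $(\l\nu)^{-s}\sim\mu^{-s}$ gain on the second and third components, the $\psi_\mu$ and $\rho_\mu$ pieces each contribute exactly $\ep^2\mu^{k-s+1/2-1/p}\nu^{1-2/p}$, while the $\chi_\mu$ piece picks up an extra $\mu^{-1}\nu = \mu^{-b}$ and is strictly lower order. Summing gives \eqref{eq:lemma_u_0_1} for integer $k$; for non-integer $k$ I would interpolate between adjacent integer levels, or equivalently observe that $\widehat{u_0}$ is essentially supported in the shell $|\xi|\sim\mu$, so fractional Bessel derivatives multiply by the clean factor $\mu^k$.

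Claim (iii) is then immediate: choosing $k = s$ and $p=2$ in \eqref{eq:lemma_u_0_1} makes the exponents of $\mu$ and $\nu$ cancel and leaves $|u_0|_{H^s}\leq C_s\ep^2$, so it suffices to take $\ep_0 := \min(1,1/C_s)$. The only genuine technical obstacle is the fractional-$s$ case of step (ii); the integer case is clean bookkeeping, and the additional smallness of the $\chi_\mu$ contribution is precisely where the anisotropic ratio $\nu\ll\mu$ first earns its keep.
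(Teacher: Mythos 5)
Your proposal is correct and follows essentially the same route as the paper's (much terser) proof: divergence-freeness from the $\theta$-independence of $\psi_\mu$ and the compatibility of $\phi,\phi_1$, the $W^{k,p}$ bound from the anisotropic change of variables in cylindrical coordinates with each derivative costing at most $\max\{\mu,\nu,\l\nu\}\sim\mu$, and \eqref{eq:lemma_u_0_2} by setting $k=s$, $p=2$. The only point where you add something the paper leaves implicit is the reduction from fractional to integer $k$ (interpolation), which is handled correctly.
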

\begin{proof}
The zero divergence of $u_0$ was proved in \eqref{eq:u_0_zero_div}. The size of the support follows from \eqref{eq:u_0_support}.

By reducing to the case $k \in \NN$, we need to estimate
\begin{equation}\label{eq:xux_lemma_u_0_1}
 |{u}_0 |_{W^{k,p}  }  \leq  |{u}_{0, \theta } \et |_{W^{k,p}  } +  | {u}_{0, r } \er |_{W^{k,p}  } +  |  {u}_{0, z } \ez  |_{W^{k,p}  } +  |  {u}_{c } \ez  |_{W^{k,p}  }.
\end{equation}

Since the estimates of $u_{0,z}$ and $u_c$ were proved in \eqref{eq:est_u_0_components_z}, we focus on $ {u}_{0, r }\er$ and $ {u}_{0, \theta } \et$.

By the point-wise bounds $ | \nabla^k \er | \lesssim r^{-k}  $ and $ | \nabla^k \et | \lesssim r^{-k}  $ for $k\in \NN$, the estimates of $ {u}_{0, r }\er$ and $ {u}_{0, \theta } \et$  follow  from \eqref{eq:est_u_0_components_theta_r}. We conclude that \eqref{eq:lemma_u_0_1} holds.

To show \eqref{eq:lemma_u_0_2}, we just set $k=s$ and $p=2$ in \eqref{eq:lemma_u_0_1}, then take $\ep>0$ small to absorb the universal constant.

\end{proof}

\subsection{The approximate solution}

Due to the anisotropy in the initial  data ${u}_0$ and our choice of frequency parameters \eqref{eq:def_nu_lambda},   we can approximate the exact solution of \eqref{eq:euler} with the initial data $u_0   $ by the first order transport dynamics of the swirl component.

Define the   approximate solution $\overline{u} :\RR^+ \times \RR^3 \to \RR^3 $ by
\begin{equation}\label{eq:def_overline_u}
\begin{aligned}
\overline{u}(t,x) &=   \overline{u}_\theta  \et + \overline{u}_r \er +  \overline{u}_z \ez \\
&=  \overline{u}_\theta \e_\theta +  {u}_{0, r} \er   +  {u}_{0, z} \ez + u_c \ez   .
\end{aligned}
\end{equation}
where $ \overline{u}_\theta:\RR^+ \times \RR^3 \to \RR  $ is the unique smooth solution of the free transport equation on $( r,z) \in \RR^2 $
\begin{equation}\label{eq:def_overline_u_theta}
\begin{cases}
\p_t \overline{u}_\theta +   ( {u}_{0, z} \p_z  +  {u}_{0, r} \p_r   )   \overline{u}_\theta = 0 &\\
\overline{u}_\theta  |_{t = 0} =   {u}_{0,\theta} 
\end{cases}
 \text{for $(t, r,z) \in \RR^+ \times \RR^2 $}
\end{equation}
In other words, using $\rho \varphi$ variables by \eqref{eq:def_u_0_components},
\begin{equation}\label{eq:def_overline_u_theta_1}
 \overline{u}_\theta (t, r,z) = \ep^2 \mu^{1 -s } \nu^{ \frac{1}{2}   } g (  \mu \rho )\sin(\varphi  - t \rho^{-1} \ep^2 \mu^{1 -s } \nu^{ \frac{1}{2}   } )   .
\end{equation} 
 
\begin{remark}
Three remarks concerning $\overline{u}$:
\begin{itemize}

\item Only the swirl component $  \overline{u}_\theta $ evolves in time, the vortex ring part $  \overline{u}_r \er + \overline{u}_z \ez $ is stationary.

\item $\overline{u}$ is globally defined on the time axis and remains smooth at all times. And obviously $ \overline{u} |_{t = 0} = u_0 $.

\item All $L^p$ norms of $ \overline{u}$ are almost constant in time (constant in $L^p(dr dz d\theta) $ with the metric $dr dz d\theta$), but any norm with derivatives will grow as  $t >0$. This is the driving mechanism of our $\dot{H}^s$-norm inflation.

\end{itemize}

\end{remark}

Define the critical time $t^* > 0$:
\begin{equation}\label{eq:def_critical_t*}
 t^* =  \ep^{- N  -2 }  \mu^{ - 2  + s } \nu^{-\frac{1}{2}} .
\end{equation}
Here $ N >0 $ is a large exponent that we fix to be
\begin{equation}\label{eq:def_large_N}
 N = \max\{\frac{10}{s},100 \} .
\end{equation}
Note that the crucial property that $  t^* \to 0 $ as $\mu \to \infty$ due to \eqref{eq:def_nu_lambda}. As said in the introduction, we see that $ t^* \gg_\ep |\nabla u_0|_{L^\infty} $ from \eqref{eq:lemma_u_0_1}.

In the following,  only the constants with an   $ \ep $ subscript depend on $\ep$. All constants are independent of $\mu,\nu     $ but may change from line to line.

\begin{proposition}[Approximation]\label{prop:approximate}
The vector field $\overline{u}: [0,\infty) \times \RR^3 \to \RR^3$ is smooth, divergence-free, axisymmetric, and satisfies for any $k\geq 0$ and $1\leq p\leq \infty$, the estimates 
\begin{align}\label{eq:prop_error_1}
| \overline{u} (t) |_{W^{k,p}   }  \leq     C_{ k,p }  \ep^{2 - k N   }  \mu^{ k -s  } \mu^{1  - \frac{ 2}{p} } \nu^{\frac{1}{2} - \frac{1}{p} }    \quad \text{on the interval $t\in [0,t^*]$  }.
\end{align}

Moreover, $\overline{u}$ is an approximate solution of \eqref{eq:euler} in the following sense. There exist  smooth pressure $\overline{p}:   \RR^3  \to  \RR      $  and error field $\overline{E}: \RR^+ \times  \RR^3  \to  \RR^3    $, compactly supported in $\RR^3$ at each time $t \in \RR^+$, such that 
\begin{equation}\label{eq:prop_error_2}
\begin{cases}
   \p_t  \overline{u} + \overline{u}\cdot \nabla \overline{u}   + \nabla \overline{p} = \overline{E} & \text{in $(t,x) \in \RR^+ \times  \RR^3  $} \\
\D \overline{u}  = 0 &\\
\overline{u} |_{t = 0 } = u_0  & 
\end{cases}
\end{equation}
where  error field $\overline{E}$ satisfy   uniform in time estimates 
\begin{equation}\label{eq:prop_error_3}
    | \nabla^k \overline{E}(t)|_{L^2 } \leq   C_{\ep,k }   \mu^{k - s }  (\mu^{-1} \nu)   (\mu^{2-  s}    \nu^\frac{1}{2}  )  \quad \text{when $ 0 \leq t \leq t^*$} .
\end{equation}
 
\end{proposition}
\begin{proof}

We first prove \eqref{eq:prop_error_1} where we keep track of $\ep$-related constants and in the second part we prove \eqref{eq:prop_error_3} without tracking the dependence on $\ep$.

\noindent
\textbf{\underline{Part 1: Estimates of $\overline{u}$}}

Recall that $ \overline{u} =  \overline{u}_{  \theta } \et +  {u}_{  0, r } \er+  {u}_{  0, z } \ez + u_c \ez  $. By interpolation, it suffices to estimate \eqref{eq:prop_error_1} for $k \in \NN$. From  \eqref{eq:def_overline_u_theta_1} we also have  $\Supp \overline{u}  \subset   \mathcal{R}_{\mu}  
$ with the ring $ \mathcal{R}_{\mu}   \subset \RR^3$ from \eqref{eq:u_0_support}  having a measure $\sim \mu^{-2} \nu ^{-1}$.
Thus we can only consider the case $p = \infty$ and $k \in \NN$.

By the considerations in Lemma \ref{lemma:estimates_u_0}, the stationary part $  {u}_{ 0,   r } \er+  {u}_{ 0,  z } \ez + u_c \ez $ satisfies the bound \eqref{eq:prop_error_1} automatically. Moreover, by $|\nabla^k \et| \lesssim r^{-k}$, it suffices to bound $| \overline{u}_{   \theta } |_{W^{k,p}}$,
\begin{equation}\label{eq:aux_lemma_error_1}
\overline{u}_{   \theta }   =   \ep^2 \mu^{1 -s } \nu^{ \frac{1}{2}   } g (  \mu \rho )\sin(\varphi  - t \rho^{-1} \ep^2 \mu^{1 -s } \nu^{ \frac{1}{2}   } ) . 
\end{equation}

 When $k=0$, the estimates follow trivially.

For integers $k\geq 1$, observe from \eqref{eq:diff_rho_varphi} that a differentiation each time gives out a factor of 
\begin{equation}\label{eq:aux_lemma_error_2}
\max \{ \mu,   t \rho^{-2} \ep^2 \mu^{1 -s } \nu^{ \frac{1}{2}   } \} \leq  \max \{ \mu ,  C \ep^{2} t      \mu^{3 - s} \nu^{ \frac{1}{2}   } \}.
\end{equation} 

Since $t\leq t^*$, recalling the definition \eqref{eq:def_critical_t*}, we have that
\begin{equation}\label{eq:aux_lemma_error_3}
  t      \mu^{3 - s} \nu^{ \frac{1}{2}   }   \leq    \ep^{- N -2  }  \mu ,
\end{equation}
and we obtain the maximum factor for each differentiation is $ \ep^{- N   }  \mu $:
\begin{equation}\label{eq:aux_lemma_error_4}
| \nabla^k(  \overline{u}_{   \theta }  )| \leq   C_{  k } \ep^{2 - k N   }  \mu^{ k -s  }  \mu^{1 -s }     \nu^{ \frac{1}{2}   }
\end{equation}
where the constant is not dependent on $\ep$. From this estimate \eqref{eq:aux_lemma_error_4} we can recover all $W^{k,p}$ estimates for \eqref{eq:aux_lemma_error_1}, and hence \eqref{eq:prop_error_1} is proved.

\noindent
\textbf{\underline{Part 2: Estimates of $\overline{p}$ and $\overline{E}$}}

To find $\overline{p}$ and $\overline{E} $,  we rewrite  \eqref{eq:prop_error_2} in cylindrical coordinates, under the axial symmetry,
\begin{equation}\label{eq:aux_lemma_error_5}
\begin{cases}
\p_t \overline{u}_{  \theta } + \overline{u}_{  r} \p_r \overline{u}_{  \theta } + \overline{u}_{  z} \p_z \overline{u}_{  \theta }+ \frac{1}{r} \overline{u}_{  \theta }\overline{u}_{  r}  = \overline{E}_{  \theta } & \\
\p_t \overline{u}_{  r } + \overline{u}_{  r} \p_r \overline{u}_{  r } + \overline{u}_{  z} \p_z \overline{u}_{  r } - \frac{1}{r} \overline{u}_{  \theta }^2 + \p_r \overline{p} = \overline{E}_{  r} & \\
\p_t \overline{u}_{  z} + \overline{u}_{  r} \p_r \overline{u}_{ z} + \overline{u}_{  z} \p_z \overline{u}_{  z}  +  \p_z   \overline{p}  = \overline{E}_{ z} .&  
\end{cases}
\end{equation}
By the definitions  $ \overline{u}_r = {u}_{0,r}$  and $ \overline{u}_z = {u}_{0,z} + u_c$ are stationary, the above system is equivalent to 
\begin{equation}\label{eq:aux_lemma_error_5a}
\begin{cases}
\p_t \overline{u}_{  \theta } +  \big(  {u}_{ 0, r} \p_r    +  {u}_{0,  z} \p_z  \big) \overline{u}_{  \theta }+ \big(  {u}_{ c} \p_z \overline{u}_{  \theta }+ \frac{1}{r} \overline{u}_{  \theta }{u}_{0,r} \big)  = \overline{E}_{  \theta } & \\
\big(  {u}_{ 0, r} \p_r    +  {u}_{0,  z} \p_z  \big){u}_{ 0, r } + \p_r \overline{p} +\big(   {u}_{c} \p_z  {u}_{ 0, r }- \frac{1}{r} \overline{u}_{  \theta }^2 \big) = \overline{E}_{  r} & \\
 \big(  {u}_{ 0, r} \p_r    +  {u}_{0,  z} \p_z  \big)  {u}_{0,  z}  +  \p_z   \overline{p}  &\\
  \qquad +   \big(  {u}_{ 0, r} \p_r    +  {u}_{0,  z} \p_z +u_c\p_z \big) u_c +   {u}_{ c} \p_z  {u}_{0,z }   = \overline{E}_{ z}   .&
\end{cases}
\end{equation}

By \eqref{eq:stationary_2Deuler}, the vector field $( {u}_{  0,r},{u}_{  0,z }  )  \in \RR^2$ is a stationary solution of the 2D Euler in $rz$-plane with the pressure $p_0$. To equate both sides of \eqref{eq:aux_lemma_error_5a}, let us  define $\overline{E} = \overline{E}_\theta \et  +  \overline{E}_r \er  + \overline{E}_z \ez $ by  
\begin{align}\label{eq:aux_lemma_error_6}
\overline{E}_{ \theta }  & = {u}_{ c} \p_z \overline{u}_{  \theta } +  \frac{1}{r} \overline{u}_{  \theta } {u}_{0,  r}   \\
 \overline{E}_{ r} & =  {u}_{c} \p_z  {u}_{ 0, r }- \frac{1}{r} \overline{u}_{  \theta }^2  \\
\overline{E}_{ z} &= {u}_{  0,r} \p_r u_c + {u}_{ 0, z} \p_z  {u}_{c}+ {u}_{ c} \p_z  {u}_{c}    + {u}_{ c} \p_z  {u}_{0,z }
\end{align}
 and $ \overline{p}(r,z) := p_0(r,z)$  with $p_0(r,z)$ from \eqref{eq:stationary_2Deuler}. Note that $\overline{p}$ is also smooth as a function $\RR^3 \to \RR$ since $p_0(r,z)$ is constant near $r=0$.

It remains to show the estimate for $\overline{E}   $. By the considerations in \eqref{eq:aux_lemma_error_2} and \eqref{eq:aux_lemma_error_3}, it suffices to only consider the case $k =0$ for $ | \overline{E} |_{L^2 } $.

Since   estimating  $\overline{E}_\theta$ and $\overline{E}_z$ is very  similar, we only demonstrate the estimate of $\overline{E}_r $.

 For $ \overline{E}_r$, we   use the H\"older inequality and   the fact that $r \sim \nu^{-1}$ on the support of $\overline{u}_{    \theta } $, 
\begin{equation}\label{eq:aux_lemma_error_10}
\begin{aligned}
 | \overline{E}_r |_{L^2 }&  \leq |  {u}_{c}|_{L^2 } | \p_z  {u}_{ 0, r }  |_{L^\infty  } +    | \frac{1}{r}  \overline{u}_{    \theta }   |_{L^2 }   |\overline{u}_{   \theta }   |_{L^\infty  }\\
&  \lesssim |  {u}_{c}|_{L^2 } |\nabla  {u}_{ 0, r }  |_{L^\infty  }+ \nu   |   \overline{u}_{    \theta }   |_{L^2 }   |\overline{u}_{   \theta }   |_{L^\infty  }.
\end{aligned}
\end{equation}
By  \eqref{eq:est_u_0_components_theta_r},\eqref{eq:est_u_0_components_z}, and  \eqref{eq:prop_error_1} proved in Part 1,
\begin{equation}\label{eq:aux_lemma_error_11}
\begin{aligned}
| \overline{E}_r |_{L^2 }&      \lesssim \big( \mu^{-1}\nu   \mu^{-s}  \mu^{1-s} \mu  \nu^{ \frac{1 }{2}}  \big) + \nu \big( \mu^{-s}  \mu^{  -s} \mu  \nu^{ \frac{1 }{2}}  \big)  \\
&\lesssim  \mu^{1- 2 s   } \nu^\frac{3}{2}  .
\end{aligned}
\end{equation}

\end{proof}

In the below lemma, we show that the approximate solution $ \overline{u} $ develops the desired norm inflation at the critical time $t^*$.
\begin{lemma}\label{lemma:Hs_lowerbound}
There exists a universal constant $\ep_0   \in (0,1)$ such that for any $0<\ep \leq \ep_0$,   there holds the lower bound
\begin{align}\label{eq:lemma_Hs_lowerbound_1}
|  \overline{u} (t^*)|_{\dot{H}^{s}} \geq  \ep^{-2}
\end{align}
where $t^* > 0$ is the critical time defined in \eqref{eq:def_critical_t*}.

\end{lemma}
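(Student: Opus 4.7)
My plan is to isolate the $\er$-component of $\overline u$ as the sole driver of the inflation, and then extract the lower bound from the large $z$-derivative of the phase at time $t^*$. Setting $\Phi(t,z,r,\theta):=\l\theta-t\l r^{-1}\psi_\mu(z,r)$, I would decompose
\[
\overline u(t^*)=\psi_\mu\et+\frac{\chi_\mu\sin\Phi}{(\l\nu)^s}\ez+w,\qquad w:=\frac{\rho_\mu\sin\Phi}{(\l\nu)^s}\er.
\]
The toroidal piece $\psi_\mu\et$ is time-independent, so its $\dot H^s$-norm is at most $|u_0|_{H^s}\le\ep$ by Lemma~\ref{lemma:estimates_u_0}. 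The axial piece oscillates identically to $w$ but its scalar prefactor $\chi_\mu$ is smaller than $\rho_\mu$ by the anisotropic factor $\mu^{-1}\nu\ll 1$ in $L^2$ (cf. \eqref{eq:scaling_profiles}), so it is negligible. Hence it suffices to show $|w(t^*)|_{\dot H^s}\gtrsim\ep^{-2}$.

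The core computation is the size of the phase gradient at $t^*$: from $\p_z\Phi=-t^*\l r^{-1}\p_z\psi_\mu$, the scaling \eqref{eq:scaling_profiles} and the definition \eqref{eq:def_critical_t*} yield
\[
|\p_z\Phi(t^*,z,r,\theta)|\sim t^*\cdot(\l\nu)\cdot\ep^2\mu^{3/2-s}\nu=\ep^{-N}\mu=:\kappa
\]
on a positive-measure subset of $\Supp\rho_\mu$, the positivity being guaranteed by $\p_z\phi\not\equiv 0$, which is itself forced by $\int\phi\,dz=0$.

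To convert this into an $\dot H^s$-lower bound, I would use $|\xi|^{2s}\ge|\xi_3|^{2s}$ to reduce to the one-dimensional $z$-fractional derivative, $|w|_{\dot H^s(\RR^3)}^2\ge|(-\p_{zz})^{s/2}w|_{L^2(\RR^3)}^2$. For $s\in(0,1)$, the 1D Gagliardo representation in $z$ gives
\[
|(-\p_{zz})^{s/2}w|_{L^2}^2=c_s\int_{\RR^3}\int_{\RR}\frac{|w(z,x_1,x_2)-w(z',x_1,x_2)|^2}{|z-z'|^{1+2s}}\,dz'\,dx,
\]
and restricting $|z-z'|\sim 1/\kappa$ makes the phase change $(z-z')\p_z\Phi$ of order one, so $|w(z,\cdot)-w(z',\cdot)|\gtrsim|\rho_\mu|(\l\nu)^{-s}|\cos\Phi|$; averaging $\cos^2\Phi$ over $\theta$ yields $|w|_{\dot H^s}^2\gtrsim\kappa^{2s}|w|_{L^2}^2$. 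For $s\in[1,5/2)$, I would write $s=k+s'$ with $k\in\{1,2\}$ and $s'\in[0,1)$, use $|(-\p_{zz})^{s/2}w|_{L^2}=|(-\p_{zz})^{s'/2}\p_z^k w|_{L^2}$, and run the same Gagliardo argument on $\p_z^k w$; its principal Leibniz summand is $\rho_\mu(\p_z\Phi)^k(\pm\sin\Phi)/(\l\nu)^s$ of size $\kappa^k\rho_\mu/(\l\nu)^s$, while every other summand loses at least a factor $\mu/\kappa=\ep^N$.

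Combining with $|w|_{L^2}\sim\ep^2\mu^{-s}$ from \eqref{eq:scaling_profiles} gives
\[
|w(t^*)|_{\dot H^s}\gtrsim\kappa^s|w|_{L^2}\sim\ep^{2-Ns}\ge\ep^{-8}\ge\ep^{-2}
\]
for $\ep$ sufficiently small, since $Ns\ge 10$ by \eqref{eq:def_large_N}. The main technical obstacle will be the bookkeeping in the case $s\in[1,5/2)$: one must check that the numerous subleading summands arising from $\p_z^k(\rho_\mu\sin\Phi)$ do not accidentally cancel or dominate the principal $(\p_z\Phi)^k$-contribution after the Gagliardo integration. This reduces to the uniform separation $|\p_z^j\Phi|\lesssim\mu^{j-1}\kappa$ and $|\p_z^j\rho_\mu|\lesssim\mu^j|\rho_\mu|$ for $j\ge 1$, each of which is immediate from \eqref{eq:scaling_profiles} and the identity $\mu=\ep^N\kappa$.
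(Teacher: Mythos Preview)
Your proposal is correct but takes a more hands-on route than the paper. The paper avoids the Gagliardo machinery entirely by first establishing the single integer-order lower bound $|\overline u(t^*)|_{\dot H^1}\geq C\ep^{2-N}\mu^{1-s}$ (via exactly the $\p_z\Phi$ computation you isolate, applied to $\p_z\overline u_r$ in $L^2$), and then recovering the $\dot H^s$ bound by Sobolev interpolation: for $s\geq 1$ one interpolates $\dot H^1$ between $L^2$ and $\dot H^s$, while for $0<s<1$ one interpolates $\dot H^1$ between $\dot H^s$ and $\dot H^2$. The required \emph{upper} bounds on $|\overline u|_{L^2}$ and $|\overline u|_{\dot H^2}$ are already available from \eqref{eq:prop_error_1}, and the interpolation arithmetic delivers $|\overline u(t^*)|_{\dot H^s}\geq C\ep^{2-Ns}$ in both cases. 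Your direct Gagliardo attack yields the same exponent but at the cost of the scale-restriction argument and the Leibniz bookkeeping for $s\geq 1$; the paper trades all of that for two lines of algebra once the $\dot H^1$ lower bound is in hand. One minor point in your write-up: when you discard the $\ez$-piece you should note that it is its $\dot H^s$-norm (not merely its $L^2$-norm) that is $\mu^{-1}\nu$-smaller than that of $w$; this follows from the same phase analysis applied with prefactor $\chi_\mu$ in place of $\rho_\mu$, or more simply from \eqref{eq:prop_error_1}.
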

\begin{proof}

To prove  \eqref{eq:lemma_Hs_lowerbound_1}, we first prove a lower bound for  $ | \overline{u} |_{\dot{H}^1 } $.

By the definitions of $ \dot{H}^1$ norm and of $\overline{u}$, we have
$$
 | \overline{u} |_{\dot{H}^1 } =  | \nabla  \overline{u} |_{L^2}   \geq  | \nabla (\overline{u}_\theta \et) |_{L^2} -   | \nabla (  {u}_{0 , r } \er + {u}_{0 , z } \ez+  {u}_{c } \ez) |_{L^2 } 
$$
and hence 
\begin{equation}\label{eq:aux_lemma_Hs_lowerbound_1}
\begin{aligned} 
 | \overline{u} |_{\dot{H}^1 } &  \geq | \nabla \overline{u}_\theta  |_{L^2 }  - C |  r^{-1} \overline{u}_\theta  |_{L^2 } -  C |  r^{-1}  {u}_{0 , r }  |_{L^2 }    -| \nabla {u}_{c }  |_{L^2 } \\
&\qquad \qquad -   | \nabla    {u}_{0 , r } |_{L^2 }     -  | \nabla {u}_{0 , z }  |_{L^2 }  .
\end{aligned}
\end{equation}
By   \eqref{eq:est_u_0_components_theta_r} and \eqref{eq:est_u_0_components_z}, the   terms on the last line of \eqref{eq:aux_lemma_Hs_lowerbound_1} are bounded from below by $ -  C \ep^2  \mu^{1 -s }$. Similarly, the negative three terms on the first line are bounded from below by $-C  \ep^2  \mu^{  - s} \nu$, which is even better than $- C \ep^2  \mu^{1 - s}$. Therefore, it suffices to show
\begin{equation}\label{eq:aux_lemma_Hs_lowerbound_1a}
| \nabla   \overline{u}_\theta  (t^*) |_{L^2 } \geq C  \ep^{2 -   N   }  \mu^{ 1 -s  } 
\end{equation}
which dominates $\ep^2  \mu^{1 - s }$ for $\ep>0$ small. 

By the point-wise orthogonality $| \nabla f|^2 = |(\p_r f, \p_z f)|^2 =  | \p_\rho f |^2 + |\frac{1}{\rho}  \p_\varphi f |^2 $ for any axisymmetric scalar function $f:\RR^3 \to \RR$, we can focus on such a lower bound for $ \p_\rho  \overline{u}_\theta $. A direct computation shows,  
\begin{align*}
\p_\rho  \overline{u}_{\theta }  =  & \ep^2 \mu^{1 -s } \nu^{ \frac{1}{2}   } g (  \mu \rho )\cos(\varphi  - t \rho^{-1} \ep^2 \mu^{1 -s } \nu^{ \frac{1}{2}   } ) t \rho^{-2} \ep^2 \mu^{1 -s } \nu^{ \frac{1}{2}   }   \\
&  +     \ep^2 \mu^{1 -s } \nu^{ \frac{1}{2}   } \mu  g' (  \mu \rho )\sin(\varphi  - t \rho^{-1} \ep^2 \mu^{1 -s } \nu^{ \frac{1}{2}   } ),  
\end{align*} 
and integrating in the $\theta, \rho ,\varphi $ variables and treating the second term above as an error, we have
\begin{align}\label{eq:aux_lemma_Hs_lowerbound_2}
| \p_\rho  \overline{u}_{\theta } (t^*)|_{L^2} \geq C \ep^2\mu^{  -s }   \Big(   t^*  \ep^2 \mu^{3 -s } \nu^{ \frac{1}{2}   }  -    c \mu    \Big)  .
\end{align} 
By the definition of $t^*$ from \eqref{eq:def_critical_t*}, $t^*  \ep^2 \mu^{3 -s } \nu^{ \frac{1}{2}   } \geq  \ep^{- N} \mu^{ 1}$.  It is evident that for all    $\ep>0$ sufficiently small,  the first term above dominates the second one, and  we have
\begin{equation}\label{eq:aux_lemma_Hs_lowerbound_3}
| \p_\rho  \overline{u}_{\theta } (t^*)|_{L^2} \geq C  \ep^{2 -   N   }  \mu^{ 1 -s  }    .
\end{equation}

From \eqref{eq:aux_lemma_Hs_lowerbound_1}, \eqref{eq:aux_lemma_Hs_lowerbound_1a} and \eqref{eq:aux_lemma_Hs_lowerbound_3}, for any    $\ep>0$ sufficiently small 
\begin{equation}\label{eq:aux_lemma_Hs_lowerbound_4}
 | \overline{u} (t^*) |_{\dot{H}^1 } \geq  C  \ep^{2 -   N   }  \mu^{ 1 -s  }   .
\end{equation}

We now use the lower bound \eqref{eq:aux_lemma_Hs_lowerbound_4} to deduce the sought estimate \eqref{eq:lemma_Hs_lowerbound_1}. We split the argument into two cases.

\noindent
\textbf{\underline{Case 1: $s \geq 1$}}

By the Sobolev interpolation, 
\begin{equation}\label{eq:aux_lemma_Hs_lowerbound_11}
 | \overline{u} |_{\dot{H}^1} \leq  C | \overline{u} |_{L^2 }^{1 - \frac{1}{s}}  | \overline{u} |_{ \dot{H}^s }^{ \frac{1}{s}} \quad \text{when $s \geq 1$} .
\end{equation}

Then since $1-s \leq 0$,    \eqref{eq:prop_error_1} for $L^2$ and \eqref{eq:aux_lemma_Hs_lowerbound_4} imply
\begin{equation}\label{eq:aux_lemma_Hs_lowerbound_13}
\begin{aligned}
  | \overline{u} (t^*) |_{\dot{H}^s } & \geq | \overline{u} |_{H^1}^s   | \overline{u} |_{L^2 }^{ 1 - s}  \\
& \geq \big(  \ep^{2-N } \mu^{1-s} \big)^s \big(  \ep^{2  } \mu^{ -s} \big)^{1-s} \\
& \geq       C \ep^{ 2-N s} .
\end{aligned}
\end{equation}

\noindent
\textbf{\underline{Case 2: $0< s \leq 1$}}

 In this case, we use the interpolation
\begin{equation*}
 | \overline{u} |_{\dot{H}^1} \leq  C | \overline{u} |_{\dot{H}^s }^{\frac{1}{2-s}} | \overline{u} |_{\dot{H}^2}^{\frac{ 1 - s }{2-s}}   ,
\end{equation*}
so that
\begin{equation}\label{eq:aux_lemma_Hs_lowerbound_22}
 | \overline{u} |_{\dot{H}^s} \geq  C | \overline{u} |_{\dot{H}^1 }^{ {2-s} } | \overline{u} |_{\dot{H}^2}^{ {   s-1  } }   .
\end{equation}

Since $ s  - 1  \leq 0$, from \eqref{eq:aux_lemma_Hs_lowerbound_4}   and  \eqref{eq:prop_error_1} again, it follows that
\begin{equation}\label{eq:aux_lemma_Hs_lowerbound_24}
\begin{aligned}
  | \overline{u} (t^*) |_{\dot{H}^s }  & \geq   \big(  \ep^{2-N } \mu^{1-s} \big)^{2-s} \big(  \ep^{2 -2N } \mu^{ 2-s} \big)^{s - 1 } \\
  & \geq    C \ep^{ 2-N s}  .
\end{aligned}
\end{equation}

In either case, \eqref{eq:lemma_Hs_lowerbound_1} follows from the fact $Ns \geq 10 $.

\end{proof}

\section{Perturbation analysis}\label{sec:no_blowup_proof}

In this section, we prove that the approximate solution $ \overline{ u} $ defined in the previous section stays close to the actual solution $u$ of \eqref{eq:euler}, at least until the critical time $t^*$,   the onset of $\dot H^s$-norm inflation.

\subsection{The main proposition}

We will be proving the following proposition, the main result of this section.
\begin{proposition}\label{prop:no_blowup}

Let $T=T(u_0 )> 0 $ be the maximal time of existence for the local-in-time solution $u: [0 , T ) \times \RR^3 \to \RR^3$ of \eqref{eq:euler} with the initial data $u_0$ defined by \eqref{eq:def_u_0}.

For any $\ep>0$, there exists $\mu_0>0$ sufficiently large such that if $\mu \geq \mu_0$, then there must be $ 0<   t^*< T$, namely $ u  \in C^\infty( [0,t^* ] \times \RR^3 )$.

More quantitatively, for any $\ep>0$, if $\mu \geq \mu_0$, then 
\begin{equation}\label{eq:prop_no_blowup}
|u - \overline{u}|_{L^\infty ([0,t^*]; H^k) } \leq C_{k,\ep } \mu^{k - s} 
\end{equation}
for any $k \geq 1$ where $C_{k,\ep }$ is independent of $\mu$.
\end{proposition}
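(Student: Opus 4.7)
The plan is to set up a bootstrap argument on $w := u - \overline{u}$ that transfers a very small low-norm error up to high Sobolev norms via interpolation, and simultaneously rules out blow-up of $u$ on $[0,t^*]$ through the continuation criterion embedded in Proposition \ref{prop:kato_ponce}. Subtracting \eqref{eq:prop_error_2} from \eqref{eq:euler}, $w$ solves
\begin{equation*}
\p_t w + u\cdot\nabla w + w\cdot\nabla\overline{u} + \nabla p = -\overline{E}, \qquad w(0)=0,
\end{equation*}
with the crucial structural feature $\D w = -\D\overline{u}$, which by \eqref{eq:prop_error_1b} is smaller than $|\nabla\overline{u}|$ by a factor $\mu^{-b}$. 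A preliminary scaling check using \eqref{eq:prop_error_1} at $k=1$, $p=\infty$ together with \eqref{eq:def_critical_t*} shows $t^*|\nabla\overline{u}|_{L^\infty}\leq C_\ep$ independently of $\mu$, so any Gronwall exponential arising from transport by $\overline{u}$ stays bounded by a $\mu$-independent constant.

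The bootstrap runs over integers $0\leq k\leq K$ for $K$ chosen large: assume $|w(t)|_{H^k}\leq 2C_k\mu^{k-s}$ on the maximal subinterval $[0,T_1]$ of the lifespan of $u$, which contains a neighborhood of $0$ since $w(0)=0$. The first step is a dedicated low-norm estimate: handle the pressure via the identity $\int w\cdot\nabla p = -\int p\,\D\overline{u}$ (valid because $\D w = -\D\overline{u}$) and elliptic regularity against the smallness \eqref{eq:prop_error_1b}---alternatively by Leray-projecting, whose gradient remainder is already controlled at the order $\mu^{-s-b}$ by $\D\overline{u}$---and integrate $|\overline{E}|_{L^2}$ from \eqref{eq:prop_error_3}. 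This should yield $|w(t)|_{L^2}\leq C_\ep\mu^{-s-b}$ on $[0,t^*]$, strictly smaller than the nominal $\mu^{-s}$. Next, the $H^k$ energy inequality obtained via Kato--Ponce commutators reads schematically
\begin{equation*}
\tfrac{d}{dt}|w|_{H^k}\lesssim |\nabla u|_{L^\infty}|w|_{H^k} + |u|_{H^k}|\nabla w|_{L^\infty} + |w\cdot\nabla\overline{u}|_{H^k} + |\overline{E}|_{H^k} + \mathrm{pressure},
\end{equation*}
where the first term feeds the harmless exponential, the $\overline{E}$ forcing integrates to $C_\ep\mu^{k-s-b}\ll\mu^{k-s}$ by \eqref{eq:prop_error_3}, and the pressure is handled at each $k$ as at $k=0$.

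The main obstacle is the commutator term $|u|_{H^k}|\nabla w|_{L^\infty}$: since $|u|_{H^k}$ is dominated by $|\overline{u}|_{H^k}\sim \mu^{k-s+3/2-b}$, which is much larger than $\mu^{k-s}$, a direct Sobolev bound $|\nabla w|_{L^\infty}\lesssim |w|_{H^{5/2+\delta}}$ is insufficient to close the bootstrap. The remedy---and the place where the anisotropy is essential---is to interpolate $|\nabla w|_{L^\infty}$ between the very small $|w|_{L^2}\leq C_\ep\mu^{-s-b}$ from the first step and the bootstrap bound at the top level $|w|_{H^K}$. Taking $K$ sufficiently large drives the interpolation exponent on $|w|_{L^2}$ close to one, so every bad term inherits a net gain of $\mu^{-\alpha b}$ for some $\alpha>0$. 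Summing over $k$ and applying Gronwall with the $\mu$-independent exponential then recovers the strict inequality $|w(t)|_{H^k}\leq C_k\mu^{k-s}$ for $\mu\geq\mu_0(\ep,s,K)$, closing the bootstrap. This uniform $H^k$ bound on $u = \overline{u}+w$ on $[0,T_1]$ precludes finite-time blow-up, forcing $T_1\geq t^*$ by the continuation criterion, and establishes \eqref{eq:prop_no_blowup}.
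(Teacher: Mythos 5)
Your overall architecture (bootstrap, energy estimates for $w=u-\overline u$, pressure handled through $\langle p,\D\overline u\rangle$ using $\D w=-\D\overline u$, and the observation $t^*|\nabla\overline u|_{L^\infty}\le C_\ep$ so that Gronwall exponentials are $\mu$-independent) matches the paper. But the mechanism you propose for closing the high-norm estimates has a genuine gap. First, the $L^2$ gain you claim is too strong: the pressure source $|\langle p,\D\overline u\rangle|\lesssim \mu^{1/2-3s}\nu^2$ combined with $t^*\sim\ep^{-N-2}\mu^{-3/2+s}\nu^{-1}$ yields $|w|_{L^2}\lesssim_\ep\mu^{-s}(\mu^{-1}\nu)^{1/2}=\mu^{-s-b/2}$, not $\mu^{-s-b}$; the pressure term, not $\overline E$, is the bottleneck. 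Second, and more seriously, the interpolation step does not close. Writing $|\nabla w|_{L^\infty}\lesssim|w|_{L^2}^{1-\theta}|w|_{H^K}^{\theta}$ with $\theta\approx 5/(2K)$, the bootstrap hypothesis $|w|_{H^K}\le 2C_K\mu^{K-s}$ carries \emph{no} smallness, so $|\nabla w|_{L^\infty}\lesssim\mu^{5/2-s-\beta(1-\theta)}$ where $\beta$ is the $L^2$ gain. The term $t^*\,|u|_{H^k}|\nabla w|_{L^\infty}$ must be $\lesssim\mu^{k-s}$, and since $|u|_{H^k}\sim\mu^{k-s}$ and $t^*\mu^{5/2-2s}\cdot\mu^{k}\sim\ep^{-N-2}\mu^{k-s}\mu^{b}$, you need $\beta(1-\theta)>b$. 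With $\beta=b$ this already fails (by the factor $\theta>0$), and with the justified $\beta=b/2$ it fails by a factor of two in the exponent. So the claimed "net gain of $\mu^{-\alpha b}$ for every bad term" is not delivered by interpolating against a gainless top-level bound.

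The paper's resolution is structurally different in two respects worth noting. It never places an $L^\infty$ norm on $w$: in the Leibniz expansion the sup norms always fall on $u$ or $\overline u$, where they are controlled by the Kato--Ponce propagation estimate in $W^{k,p}$ for large finite $p$ plus Sobolev embedding, at the cost of only a tiny loss $(\mu^{-1}\nu)^{-4^{-k-1}}$. Correspondingly, the induction carries a quantitative gain $(\mu^{-1}\nu)^{\delta_k}$ with $\delta_k=\tfrac14+4^{-k}$ at \emph{every} Sobolev level of $w$, chosen so that $\delta_{k+1-m}-4^{-k-1}\ge\delta_k$; this is what survives to give $|\nabla w|_{L^\infty}\le C_\ep\mu^{3/2-s}\nu\,\mu^{-\delta}$ and close the bootstrap, which is run on $|\nabla u|_{L^\infty}$ rather than on $|w|_{H^k}$. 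If you want to salvage your scheme, you must build a fixed positive $(\mu^{-1}\nu)$-gain into the bootstrap ansatz at all levels $k$ (not just $L^2$) and verify it propagates under the commutator terms; once you do that you are essentially reproducing the paper's induction.
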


It is straightforward to obtain Theorem \ref{thm:Hs} once we combine Proposition \ref{prop:no_blowup} with Proposition \ref{prop:approximate} and Lemma \ref{lemma:estimates_u_0}, by taking $\ep>0$ possibly smaller and then $\mu\geq 1$ large. 

The proof of Proposition \ref{prop:no_blowup} will be broken into several small lemmas below. The essence is a bootstrap argument that ``propagates'' the smallness at lower $L^2$ norm of the difference $u - \overline{u} $ to all higher Sobolev $H^k$ norm on the interval $[0,t^*]$.

\subsection{The bootstrap assumption}

We now introduce the bootstrap assumption.

Denote by $w  = u - \overline{u}$ the difference between the approximate solution $ \overline{u}$ and the exact solution $u$ (having the same initial data $u_0$ as $ \overline{u} $) . It follows from Proposition \ref{prop:approximate} that the vector field $w$, well-defined on the time interval $[0,T)$,  satisfies the evolution equation
\begin{equation}\label{eq:aux_noblowup_0}
\begin{cases}
\p_t w + u\cdot \nabla w + w \cdot \nabla  \overline{u} + \nabla q = \overline{E} & \\
\D w  = 0 & \\
w |_{t = 0 } = 0 &
\end{cases}
\quad \text{for $(t,x) \in [0,T ) \times \RR^3 $}
\end{equation}
where $q : = p - \overline{p}$ with $p$ being the pressure of the exact solution $u$ and $\overline{p}$ the pressure from Proposition \ref{prop:approximate}.

In view of \eqref{eq:prop_error_1} from Proposition \ref{prop:approximate}, for any given $\ep>0$ let us fix $M_\ep \geq 1$  such that 
\begin{equation}\label{eq:aux_noblowup_1}
|\nabla \overline{u} (t)|_{L^\infty} \leq M_\ep \mu^{2 -s} \nu^\frac{1}{2}  \quad \text{for all} \quad    0\leq t    \leq t^*  .
\end{equation}

We will prove the following bound:
\begin{equation}\label{eq:aux_noblowup_claim_0}
  |\nabla u (t) |_{L^\infty } \leq 2   M_\ep  \mu^{2 -s} \nu^\frac{1}{2}   \quad \text{for all} \quad    0\leq t    \leq t^*  
\end{equation}  
by a continuity argument. Since   $|\nabla  {u}_0  |_{L^\infty} \leq M_\ep \mu^{2 -s} \nu^\frac{1}{2} $ holds at $t = 0$, let us introduce

\mdfsetup{skipabove=2pt,skipbelow=2pt}
\begin{mdframed}[linewidth=1pt,frametitle={The bootstrap assumption:},nobreak=true]
 
For some $ 0< t_0 \leq t^*$, there holds 
\begin{equation}\tag{$\dagger$}\label{eq:aux_noblowup_claim}
  |\nabla u (t) |_{L^\infty } \leq 2   M_\ep  \mu^{\frac{3}{2} -s} \nu   \quad \text{for all} \quad    0\leq t    \leq t_0  .
\end{equation}

\end{mdframed}

In the steps below, we will prove that if $\mu $ is sufficiently large (depending on $\ep>0$), then under the bootstrap assumption \eqref{eq:aux_noblowup_claim},  we  have the improved bound
$$
  |\nabla u (t) |_{L^\infty } \leq    \frac{3}{2} M_\ep  \mu^{2 -s} \nu^\frac{1}{2}   \quad \text{for all} \quad    0\leq t    \leq t_0  ,
$$
which will imply   \eqref{eq:aux_noblowup_claim} on the whole interval $[0, t^*]$ by continuity.

\subsection{Basic estimates}

To facilitate the estimates, we will frequently use the following bounds,  direct consequences of Proposition \ref{prop:kato_ponce}, Lemma \ref{lemma:estimates_u_0}, the definition of $t^*$ (from \eqref{eq:def_critical_t*}), and the bootstrap assumption \eqref{eq:aux_noblowup_claim},
\begin{equation}\label{eq:aux_noblowup_assumption_0}
\begin{cases}
| u (t)|_{ W^{k, p} }   & \leq C_{\ep, k,p }     \mu^{ k -s  } \mu^{ 1  - \frac{2}{p}  } \nu^{\frac{1}{2} - \frac{1}{p}  }  ,   \\
t \mu^{2  - s } \nu^\frac{1}{2}  & \leq C_\ep 
\end{cases}
\quad \text{for any}\,\,  0\leq t    \leq t_0 
\end{equation}
where $k \geq 0$  and $ 1<p<\infty $.

\subsection{$L^2$ estimates}
As the first step in the bootstrap argument, we show $w$ is small in $L^2$, gaining a small factor $  (\mu^{-1}   \nu  )   $   comparing to $ | u|_{L^2}$ and $ |  \overline{{u}}|_{L^2}$.

\begin{lemma}\label{lemma:bootstrap_L2}
Under the bootstrap assumption \eqref{eq:aux_noblowup_claim}, the difference $w = u - \overline{u}$ satisfies
\begin{equation}\label{eq:lemma_bootstrap_L2}
 |w(t) |_{L^{2}}  \leq C_\ep     \mu^{ - s } \big(\mu^{-1}  \nu  \big)   \quad \text{for any}\,\,  0\leq t    \leq t_0 .
\end{equation}

\end{lemma}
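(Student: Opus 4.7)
The plan is a standard $L^2$ energy estimate on the perturbation equation \eqref{eq:aux_noblowup_0}, extracting smallness from two anisotropy-based mechanisms. The first is that $\D u = 0$ while $\D \overline{u}$ is small: testing against $w$ and integrating by parts gives
\begin{equation*}
-\int \nabla p \cdot w\,dx \;=\; \int p\,\D w\,dx \;=\; -\int p\,\D \overline{u}\,dx,
\end{equation*}
which will be handled via Lemma \ref{lemma:pressure} together with \eqref{eq:prop_error_1b}. The second mechanism is that the forcing $\overline{E}$ appears \emph{linearly} in $|w|_{L^2}$, so a refined Gronwall is needed to produce $\int_0^t |\overline{E}|_{L^2}$ rather than $\int_0^t |\overline{E}|_{L^2}^2$. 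Pairing \eqref{eq:aux_noblowup_0} with $w$, using $\D u = 0$ to kill the advective term, and applying H\"older yields
\begin{equation*}
\tfrac{1}{2}\tfrac{d}{dt}|w|_{L^2}^2 \;\leq\; |\nabla \overline{u}|_{L^\infty}|w|_{L^2}^2 + |p|_{L^2}|\D \overline{u}|_{L^2} + |\overline{E}|_{L^2}|w|_{L^2}.
\end{equation*}

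To close this, I would regularize by $f(t) := (|w(t)|_{L^2}^2 + \delta^2)^{1/2}$ for small $\delta > 0$, so that $f \geq \delta > 0$ and (since $f f' = \tfrac12 (|w|_{L^2}^2)'$) the inequality becomes linear: $f' \leq |\nabla \overline{u}|_{L^\infty} f + |p|_{L^2}|\D \overline{u}|_{L^2}/\delta + |\overline{E}|_{L^2}$. Applying the Gronwall lemma of Section \ref{sec:pre} and then optimizing over $\delta$ yields
\begin{equation*}
|w(t)|_{L^2} \;\leq\; 2\, e^{\int_0^t |\nabla \overline{u}|_{L^\infty}\,d\tau}\Bigl( \bigl(\textstyle\int_0^t |p|_{L^2}|\D \overline{u}|_{L^2}\,d\tau\bigr)^{1/2} + \int_0^t |\overline{E}|_{L^2}\,d\tau\Bigr),
\end{equation*}
valid uniformly on $[0,t_0]$.

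It then remains to insert numerical bounds on $[0,t^*]$. The bootstrap assumption \eqref{eq:aux_noblowup_claim} together with \eqref{eq:def_critical_t*} forces $\int_0^{t^*}|\nabla \overline{u}|_{L^\infty} \leq M_\ep \mu^{3/2-s}\nu\,t^* \leq C_\ep$, so the exponential prefactor is bounded by $C_\ep$. Lemma \ref{lemma:pressure} at $k=0$ gives $|p|_{L^2} \leq C_\ep \mu^{1/2-2s}\nu$ while \eqref{eq:prop_error_1b} at $(k,p)=(0,2)$ gives $|\D \overline{u}|_{L^2} \leq C_\ep \mu^{-s}\nu$; multiplying and integrating produces $\int_0^{t^*}|p|_{L^2}|\D \overline{u}|_{L^2} \leq C_\ep \mu^{-1-2s}\nu$, whose square root matches the target $C_\ep\mu^{-s}(\mu^{-1}\nu)^{1/2}$ exactly. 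From \eqref{eq:prop_error_3}, $\int_0^{t^*}|\overline{E}|_{L^2} \leq C_\ep \mu^{-1-s}\nu$, which is $(\mu^{-1}\nu)^{1/2} = \mu^{-b/2}$ smaller than the target by \eqref{eq:def_nu_lambda}, hence negligible. The main obstacle is precisely this last comparison: a naive Young's inequality $|\overline{E}|_{L^2}|w|_{L^2} \leq \tfrac12(|\overline{E}|_{L^2}^2 + |w|_{L^2}^2)$ would force in $\int_0^{t^*}|\overline{E}|_{L^2}^2$, which is larger than the square of the target by a factor $\mu^{5/2-s-2b} \gg 1$ and destroys the estimate. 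The $(|w|_{L^2}^2 + \delta^2)^{1/2}$ regularization is therefore essential in order to see $\int|\overline{E}|_{L^2}$ instead of $\int|\overline{E}|_{L^2}^2$.
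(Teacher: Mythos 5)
Your proof is correct and follows the same basic route as the paper: an $L^2$ energy estimate for \eqref{eq:aux_noblowup_0}, with the pressure term converted to $\langle p,\D\overline{u}\rangle$ and controlled by Lemma \ref{lemma:pressure} together with \eqref{eq:prop_error_1b}, then Gronwall on $[0,t^*]$ using $t^*\mu^{3/2-s}\nu\le C_\ep$. All of your numerology checks out. The one place you diverge is the treatment of $|\overline{E}|_{L^2}|w|_{L^2}$: you linearize the differential inequality via the regularization $(|w|_{L^2}^2+\delta^2)^{1/2}$ so as to see $\int_0^{t^*}|\overline{E}|_{L^2}$, whereas the paper simply uses a \emph{weighted} Young's inequality, $|\overline{E}|_{L^2}|w|_{L^2}\le \tfrac12 A^{-1}|\overline{E}|_{L^2}^2+\tfrac12 A|w|_{L^2}^2$ with $A=\mu^{3/2-s}\nu$ equal to the Gronwall rate. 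Your closing remark that the regularization is ``essential'' is therefore not accurate: only the \emph{unweighted} Young's inequality fails; with the weight $A$ one gets the source $t^*A^{-1}|\overline{E}|_{L^2}^2\lesssim_\ep \mu^{-2s}(\mu^{-1}\nu)^2$, which is below the target squared $\mu^{-2s}(\mu^{-1}\nu)$, so the paper's one-line alternative closes just as well (and is what you should expect to see elsewhere in the paper, e.g.\ in Lemma \ref{lemma:Hk_w}). This does not affect the validity of your argument.
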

\begin{proof}

Multiplying \eqref{eq:aux_noblowup_0} by $ w = u - \overline{u}$ and integrating, we obtain 
\begin{equation}\label{eq:aux_lemma_bootstrap_L2_1}
\frac{d}{dt}|w(t) |_{L^{2}}^2 \lesssim   |\nabla \overline{u}  |_{L^\infty } | w |_{ L^{2} }^2     +  | E |_{L^{2}}| w |_{L^{2}}
\end{equation}
where we have used the fact that both $u$ and  $\overline{u} $ are divergence-free.  

By  Proposition \ref{prop:approximate},  we have
\begin{equation}\label{eq:aux_lemma_bootstrap_L2_2}
\begin{aligned}
\frac{d}{dt}|w(t) |_{L^{2}}  & \lesssim_\ep \mu^{2 -s} \nu^\frac{1}{2} | w |_{ L^{2} }      +\mu^{   -   s}   (\mu^{-1}   \nu )  \mu^{2  -   s} \nu^{\frac{1}{2}}.
\end{aligned} 
\end{equation}

By Gronwall's inequality (recalling $w |_{t=0} = 0 $), from \eqref{eq:aux_lemma_bootstrap_L2_2} we have that
\begin{equation}\label{eq:aux_noblowup_7}
\begin{aligned}
 |w(t) |_{L^{2}}   & \lesssim  e^{C_\ep t \mu^{2 -s} \nu^\frac{1}{2}  }   \Big(   t \mu^{   -   s}   (\mu^{-1}   \nu   )  \mu^{2  -   s} \nu^\frac{1}{2}   \Big)            \quad \text{for any $t \in [0, t_0 ]$}  .
\end{aligned}
\end{equation}

The sought estimate then follows from \eqref{eq:aux_noblowup_assumption_0},
\begin{equation*}
 |w(t) |_{L^{2}}   \leq C_\ep     \mu^{ -   s } \big(\mu^{-1}  \nu  \big)   \quad \text{for any $t \in [0, t_0 ]$} .
\end{equation*}

\end{proof}

\subsection{$H^k$ estimates}

In this step, we propagate the small factor $ (\mu^{-1}  \nu   )   $ in $  |w  |_{L^{2}} $ to higher Sobolev   norms of $w$. Due to the failure of \eqref{eq:aux_noblowup_assumption_0} at the end-point $p =\infty$, we have a weaker bound \eqref{eq:lemma_Hk_w} when $ k \geq 1$.

However, it is crucial that the power of the smallness factor $\mu^{-1} \nu$ is strictly bigger than $\frac{1}{2}$--- this is needed to compensate the lossy embedding $H^{\frac{5}{2} + } \hookrightarrow W^{1,\infty } $ in our anisotropic setup.

\begin{lemma}\label{lemma:bootstrap_Hk}
Under the bootstrap assumption \eqref{eq:aux_noblowup_claim}, the difference $w = u - \overline{u}$ satisfies for any integer $k \geq 0$ the estimate,
\begin{equation}\label{eq:lemma_Hk_w}
\begin{aligned}
 |\nabla^k w(t) |_{L^{2}}  \leq  C_{\ep, k }     \mu^{k - s } \big(\mu^{-1}  \nu  \big)^{\delta_k}  \quad \text{for any $t \in [0, t_0 ]$}
\end{aligned}
\end{equation}
where $\delta_k  = \frac{9 + 10^{- k  }}{10 }       > 0$.

\end{lemma}
\begin{proof}

We will prove \eqref{eq:lemma_Hk_w} by induction. 

Let $k \geq 1$ as the case $k=0$ was proved in Lemma \ref{lemma:bootstrap_L2}. Now we assume \eqref{eq:lemma_Hk_w} has been proved for levels $\leq k-1$ with $\mu,\nu$ independent constants.

Let $\p^\alpha$ be an order $k$ partial derivative. We differentiate \eqref{eq:aux_noblowup_0} by $\p^\alpha$ to obtain that
\begin{equation}\label{eq:aux_noblowup_91}
\begin{aligned}
 &   \p_t \p^\alpha w +  {u} \cdot \nabla \p^\alpha w \\
& =   \p^\alpha E -  \nabla \p^\alpha p   - \p^\alpha w \cdot \nabla   \overline{u} \\
& \qquad +    \sum_{  \beta    } C_{ \alpha,  \beta  } \p^ \beta  u  \cdot \nabla  \p^{\alpha -  \beta  } w    +   C'_{ \alpha,  \beta  }\p^{ \alpha -   \beta}  w  \cdot \nabla  \p^{  \beta  } \overline{u}  
\end{aligned}
\end{equation}
where the   summation  multi-index $\beta\in \NN^3$ runs over the  range $\beta_i \leq \alpha_i$ and $|\beta| >0 $ and $C_{ \alpha,  \beta  }, C'_{ \alpha,  \beta  }$ are constants from using the product rules.

We multiply \eqref{eq:aux_noblowup_91} by $\p^\alpha w $,  integrate in space, and use the H\"older inequality to obtain
\begin{equation}\label{eq:aux_noblowup_20}
\begin{aligned}
    \frac{d}{dt} |  \p^\alpha w  |_{L^{2}}^2 & \lesssim_\alpha          |\p^\alpha E|_{L^2}   |  \p^\alpha w  |_{L^{2}}  + | \nabla   \overline{u}  |_{L^{\infty }}   |  \p^\alpha w  |_{L^{2}}^2  \\
& \quad +    |  \p^\alpha w  |_{L^{2}} \sum_{1  \leq  m \leq k }   |  \nabla^m u  |_{L^\infty}      |  \nabla^{ k+1  - m } w  |_{L^{2}}     \\
&   \quad\quad     +  |  \p^\alpha w  |_{L^{2}} \sum_{1  \leq  m \leq k }   |   \nabla^{ m +1}     \overline{u}  |_{L^\infty}  |\nabla^{k - m }  w   |_{L^{2}}    .
\end{aligned}
\end{equation}
Note that in \eqref{eq:aux_noblowup_20} we have used that $ \langle {u} \cdot \nabla \p^\alpha w,  \p^\alpha w  \rangle = 0$ as $u$ is divergence-free, so the right-hand side only has  $w$ terms with up to $k$ derivatives.

Summing \eqref{eq:aux_noblowup_20} over all multi-indexes  $|\alpha| = k$ and simplifying, we have that
\begin{equation} 
\begin{aligned}
    \frac{d}{dt} |  \nabla^k  w  |_{L^{2}}^2  & \lesssim_k  I + J 
\end{aligned}
\end{equation}
where $I$  contains the linear amplification and the source term:
\begin{equation}\label{eq:aux_noblowup_20a}
\begin{aligned}
& I =    | \nabla   \overline{u}  |_{L^{\infty }}   |  \nabla^k w  |_{L^{2}}^2          + |   \nabla u |_{L^{\infty }}  |  \nabla^k w  |_{L^{2}}^2   + | \nabla^k E |_{L^{2}}|\nabla^k w |_{L^{2}} 
\end{aligned}
\end{equation}
and $J$ consists of input from lower order derivatives of $w$:
\begin{equation}\label{eq:aux_noblowup_20b}
\begin{aligned}
&   J =   |  \nabla^k w  |_{L^{2}} \sum_{ 0 \leq   m  \leq k -1 }    |\nabla^m  w   |_{L^{2}}  \big(   |   \nabla^{k +1 - m }     \overline{u}  |_{L^\infty}  +|   \nabla^{k+1 - m }      {u}  |_{L^\infty}    \big) .
\end{aligned}
\end{equation}
Note that the term $|  w   |_{L^{2}}     |   \nabla^{k +1 }      {u}  |_{L^\infty}   $ was absent in \eqref{eq:aux_noblowup_20} but is added in the summation of \eqref{eq:aux_noblowup_20b} for the  ease of notations.

\noindent
\textbf{\underline{Estimates of $I$}}

For the term $I$, using \eqref{eq:aux_noblowup_assumption_0} and Proposition \ref{prop:approximate} we have
\begin{equation}\label{eq:aux_noblowup_I_1}
 I \leq  C_{\ep, k }      \mu^{2 -s} \nu^\frac{1}{2}  |  \nabla^k w  |_{L^{2}}^2     + | \nabla^k E |_{L^{2}}|\nabla^k w |_{L^{2}}  .
\end{equation}
Then for the last term in \eqref{eq:aux_noblowup_I_1}, we use Young's inequality together with \eqref{eq:prop_error_3} from Proposition \ref{prop:approximate} to obtain 
\begin{equation}\label{eq:aux_noblowup_I_2}
\begin{aligned}
| \nabla^k E |_{L^{2}}|\nabla^k w |_{L^{2}}  & \lesssim   \mu^{2 -s} \nu^\frac{1}{2}    |\nabla^k w |_{L^{2}}^2  + (  \mu^{2 -s} \nu^\frac{1}{2}  )^{-1} | \nabla^k E |_{L^{2}}^2 \\
&  \lesssim   \mu^{2 -s} \nu^\frac{1}{2}   |\nabla^k w |_{L^{2}}^2  + \mu^{2k-2s  } (  \mu^{2 -s} \nu^\frac{1}{2}   )   ( \mu^{-1} \nu )^2    .
\end{aligned}
\end{equation}
From \eqref{eq:aux_noblowup_I_1} and \eqref{eq:aux_noblowup_I_2}, it follows that
\begin{equation}\label{eq:aux_noblowup_I_3}
 I \leq  C_{\ep, k }  \Big(   \mu^{2 -s} \nu^\frac{1}{2}    |  \nabla^k w  |_{L^{2}}^2     + \mu^{2k-2s  } (  \mu^{2 -s} \nu^\frac{1}{2}  )   ( \mu^{-1} \nu )^2     \Big)   .
\end{equation}

\noindent
\textbf{\underline{Estimates of $J$}}

For $J$, we first need to estimate  factor $   |   \nabla^{m }     \overline{u}  |_{L^\infty}  +|   \nabla^{ m }      {u}  |_{L^\infty}      $ for integer $0 \leq m \leq k +1 $.

Since we need $ L^\infty  $-type estimates on $u$, we need to give up some ($k$-dependent) small exponent to compensate for the failure of \eqref{eq:aux_noblowup_assumption_0} at $ p = \infty$.

By the Sobolev embedding $ W^{1+\frac{3}{p} +\delta, p} (\RR^3) \hookrightarrow W^{1,\infty }(\RR^3)$  for any $\delta >0$ and $p<\infty$, it follows from \eqref{eq:aux_noblowup_assumption_0} that for any $ m  \in \NN$,
\begin{equation}\label{eq:aux_noblowup_22b}
\begin{aligned}
|\nabla^m u (t)|_{ L^\infty } & \lesssim_{\delta, p } |  u (t)|_{ W^{m+\frac{3}{p} +\delta, p} }    \\
& \lesssim_{m,\delta,p}  \mu^{ m   -s +\delta   } \mu^{1 + \frac{1}{p}     } \nu^{\frac{1}{2} - \frac{1}{p} }  \quad \text{for all}\,\,  0\leq t    \leq t_0  .
\end{aligned}
\end{equation}

In \eqref{eq:aux_noblowup_22b}, we have the freedom to choose $p<\infty$ and $\delta>0$, so by the continuity of the exponent and that $ \nu =  \mu^{1-b }$, we can choose $p <\infty, \delta >0$ depending on $k$ such that
\begin{equation}
\mu^{  \delta   } \mu^{    \frac{1 }{p}     } \nu^{  - \frac{1}{p} } \leq   ( \mu^{-1} \nu )^{- 10^{-k } }
\end{equation}
namely, in \eqref{eq:aux_noblowup_22b} we have for any $m\in \NN$
\begin{equation}\label{eq:aux_noblowup_22c}
\begin{aligned}
|\nabla^m u (t)|_{ L^\infty }  \lesssim_{\ep, m,k }  \mu^{ m  -s   }  (\mu \nu^{\frac{1}{2}  } ) ( \mu^{-1} \nu )^{- 10^{-k } }\quad \text{for all}\,\,  0\leq t    \leq t_0 .
\end{aligned}
\end{equation} 
Compared to $|\nabla^m \overline{u}   (t)|_{ L^\infty }$, we have lost a very small power of $ ( \mu^{-1} \nu )$ in \eqref{eq:aux_noblowup_22c}.

It then follows from  \eqref{eq:aux_noblowup_22c}   and the inductive assumption at levels $ m \leq  k-1$ that 
\begin{equation}\label{eq:aux_noblowup_23}
\begin{aligned}
 & \sum_{   m  \leq k -1 }    |\nabla^m  w   |_{L^{2}}  \Big(   |   \nabla^{k +1 - m }     \overline{u}  |_{L^\infty}  +|   \nabla^{k+1 - m }      {u}  |_{L^\infty}    \Big) \\
&  \leq  
C_{\ep, k } \sum_{    m  \leq k -1 }  \mu^{  m   -  s    } (\mu^{-1}  \nu )^{\delta_{ m }  }   \Big(   \mu^{k+1-m-s } (\mu \nu^{\frac{1}{2}  } )           (\mu^{-1}  \nu )^{  - 10^{-k }   }  \Big)  \\
&  \leq  
C_{\ep, k }   \mu^{k - s } (\mu^{2 - s } \nu^{\frac{1}{2}  } ) \sum_{    m  \leq k -1 }  (\mu^{-1}  \nu )^{\delta_{ m }  }                 (\mu^{-1}  \nu )^{  - 10^{-k }   }   .
\end{aligned}
\end{equation}
Using \eqref{eq:aux_noblowup_23},  that  $k\mapsto \delta_k >0$ is decreasing, and $
- 10^{-k }+  \delta_{ k-1  }  \geq \delta_{ k }
$ by the definition of $ \delta_{ k } $, for the term  $J$ we have
\begin{equation}\label{eq:aux_noblowup_23a}
\begin{aligned}
J &  \leq  
C_{\ep, k }  |  \nabla^k w  |_{L^{2}}   \mu^{  k  -  s    }    (\mu^{2-s} \nu^{\frac{1}{2}  } )          (\mu^{-1}  \nu )^{  - 10^{-k  }+  \delta_{ k - 1}    }  \\
& \leq  
C_{\ep, k } |  \nabla^k w  |_{L^{2}}   \mu^{  k  -  s    }    (\mu^{2-s} \nu^{\frac{1}{2}  } )           (\mu^{-1}  \nu )^{     \delta_{ k  }    }  . 
\end{aligned}
\end{equation}

By Young's inequality again,  
\begin{equation}\label{eq:aux_noblowup_21b}
\begin{aligned}
J \lesssim   &     \mu^{2  -s } \nu^{\frac{1}{2}  }           |  \nabla^k w   |_{L^{2}}^2  + (\mu^{2  -s } \nu^{\frac{1}{2}  } )^{2 -1} \big[  \mu^{  k    -  s    }     (\mu^{-1}  \nu )^{ \delta_{k } }\big]^2 \\
 \lesssim &   \mu^{2  -s } \nu^{\frac{1}{2}  }        |  \nabla^k w  |_{L^{2}}^2   +  \mu^{2k -2s } \mu^{2  -s } \nu^{\frac{1}{2}  } (\mu^{-1}  \nu )^{ 2\delta_{k }  }      .
\end{aligned}
\end{equation}

\noindent
\textbf{\underline{Final conclusion}}

Collecting the estimates \eqref{eq:aux_noblowup_I_3} and \eqref{eq:aux_noblowup_21b} of $I,J $,  at level $k$, on the interval $ [0,t_0] $,  we have the  following differential inequality
\begin{equation}\label{eq:aux_noblowup_24}
\begin{aligned}
    \frac{d}{dt} |  \nabla^k  w  |_{L^{2}}^2  & \lesssim_{\ep, k }    \mu^{2  -s } \nu^{\frac{1}{2}  }  |  \nabla^k w  |_{L^{2}}^2         \\
& \qquad +           \mu^{2k - 2s } \big(\mu^{2  -s } \nu^{\frac{1}{2}  } \big) \big[ (  \mu^{-1}  \nu)^{ 2  } +   (\mu^{-1}  \nu )^{ 2 \delta_{k }  }   \big]     .
\end{aligned}
\end{equation}

By Gronwall's inequality  and $0<2 \delta_{k } \leq  2 $, we obtain
\begin{equation}\label{eq:aux_noblowup_25}
\begin{aligned}
 |  \nabla^k  w (t) |_{L^{2}}^2  & \leq C_{\ep , k  }      t^*       \mu^{2k - 2s } \big( \mu^{2  -s } \nu^{\frac{1}{2}  }  \big)    (\mu^{-1}  \nu )^{ 2 \delta_{k } }   \\
& \leq  C_{\ep , k  }           \mu^{2 k -  2 s } (\mu^{-1}  \nu )^{ 2 \delta_{k } } 
\end{aligned}
\quad \text{for all}\,\,  0\leq t    \leq t_0 .
\end{equation}

So the induction is proved at level $k$ as well, namely \eqref{eq:lemma_Hk_w}  holds for all integer $k \in \NN$.

\end{proof}

\subsection{Proof of main proposition}

With all the preparations in hand, we can finish the

\begin{proof}[Proof of Proposition \ref{prop:no_blowup}]

Assume that the bootstrap assumption \eqref{eq:aux_noblowup_claim} is satisfied for some $0 < t_0 <t^*$.

In view of the Sobolev embedding
\begin{equation}\label{eq:aux_noblowup_9}
 | \nabla w    |_{L^\infty }  \lesssim_\delta     |   w  |_{H^{ \frac{ 5}{2}+\delta }  }  \quad \text{for any $\delta>0$}
\end{equation}
 we can use Lemma \ref{lemma:bootstrap_Hk}  to obtain that 
\begin{equation}\label{eq:aux_noblowup_90}
\begin{aligned}
 | \nabla w (t)  |_{L^\infty } & \leq C_{\ep} \mu^{\frac{5}{2} + \delta  -s }     (\mu^{-1} \nu )^{ \frac{9}{10}} \\
&  \leq C_{\ep} \mu^{2  -s }  \nu^{\frac{1}{2}}   \mu^{\delta} (\mu^{-1} \nu )^{ \frac{4}{10}} \quad \text{ for any $t \in [0, t_0]$.}
\end{aligned}
\end{equation}
Thanks to the fact $\nu = \mu^{1- b}$ with $b(s)>0$, in \eqref{eq:aux_noblowup_90} we can choose $\delta(s) >0$ sufficiently small such that
\begin{equation}\label{eq:aux_noblowup_9a}
 | \nabla w (t)  |_{L^\infty } \leq C_{\ep} \mu^{2  -s } \nu^{\frac{1}{2}  } \mu^{-\delta' } \quad \text{ for     some fixed $\delta' >0$.}
\end{equation}

Thanks to the negative exponent on $\mu$ in \eqref{eq:aux_noblowup_9a}, by taking $\mu \geq 1$ sufficiently large depending on $\ep$, we have 
\begin{equation}\label{eq:aux_noblowup_9b}
 | \nabla w (t)  |_{L^\infty } \leq \frac{1 }{2}  M_\ep  \mu^{2  -s } \nu^{\frac{1}{2}  }  \quad \text{for any $t \in [0,t_0]$} 
\end{equation}
where $M_\ep \geq 1$ is the same constant from the bootstrap assumption \eqref{eq:aux_noblowup_claim}.

Combining \eqref{eq:aux_noblowup_claim} and \eqref{eq:aux_noblowup_9b}, for any $t \in [0,t_0]$ we have  
\begin{equation}\label{eq:aux_noblowup_9c}
\begin{aligned}
 | \nabla u (t)  |_{L^\infty } & \leq | \nabla  \overline{u}   (t)  |_{L^\infty } +  | \nabla w (t)  |_{L^\infty } \\
& \leq  \frac{3}{2} M_\ep  \mu^{2  -s } \nu^{\frac{1}{2}  }  .
\end{aligned}
\end{equation}

In other words, we have shown that under the bootstrap assumption \eqref{eq:aux_noblowup_claim} with $0< t_0 < t^*$, the improved bound \eqref{eq:aux_noblowup_9c} holds. From here we conclude that  
\begin{equation}\label{eq:aux_noblowup_9ca}
 | \nabla u (t)  |_{L^\infty } \leq \frac{3}{2} M_\ep  \mu^{2  -s } \nu^{\frac{1}{2}  }   \quad \text{for any $t \in [0,t^*]$} 
\end{equation}
and $u$ does not blowup at $ t = t^*$.

Finally, the bound \eqref{eq:prop_no_blowup} is a direct  consequence of  \eqref{eq:lemma_Hk_w} from Lemma \ref{lemma:bootstrap_Hk} when $t_0 = t^*$.

\end{proof}

\section{Proof of Theorem \ref{thm:Hs_NS}}\label{sec:NS_proof}

In this last section, we outline the minor changes to the construction  in the inviscid case in order to prove Theorem \ref{thm:Hs_NS}.

Given $0< s< \frac{1}{2}$, we adjust the exponent  parameter $b>0$ as follows,
\begin{equation}\label{eq:def_nu_lambda_NS}
\begin{cases}
 b  =  \frac{  \frac{1}{2}  - s }{ 100 } >0 & \\
 \nu  = \mu^{1- b }   . &
\end{cases}
\end{equation}

The adjustment  is to suppress the effect of dissipation. We have a crucial relation in the viscous setting:
\begin{equation}\label{eq:small_dissip_NS}
  \mu^{2} \leq  \mu^{-1} \nu   (\mu^{2 -  s}    \nu^\frac{1}{2})
\end{equation}

Indeed, \eqref{eq:small_dissip_NS} is equivalent to
$
 \mu^{ 1+ s  } \leq  \nu^{ \frac{3}{2}}  
$
which holds due to $ 1+s < (1 - b) \frac{3}{2}$ by \eqref{eq:def_nu_lambda_NS}.

We use the same choice for the critical time $t^* > 0$:
\begin{equation}\label{eq:def_critical_t*_NS}
 t^* =  \ep^{- N  -2 }  \mu^{ - 2  + s } \nu^{- \frac{1}{2} },
\end{equation}
where we still have that $t^* \to 0$ as $\mu \to \infty$. Note that in this case $ t^* \mu^{   2 } \leq C_\ep  \mu^{-1} \nu    $, so the dissipation is negligible in our argument.

We take the same initial data $u_0$ and approximate solution $\overline{u}$ defined by \eqref{eq:def_u_0} and respectively \eqref{eq:def_overline_u} with the new parameters \eqref{eq:def_nu_lambda_NS} in place.

\subsection{Sketch of the proof}
Next, we review the small changes to results in Section \ref{sec:overline_u} and Section \ref{sec:no_blowup_proof} for the viscous case.

As in the inviscid case, on the interval $[0,t^*]$, the vector field $\overline{u}$ satisfies the same estimates as \eqref{eq:prop_error_1}
\begin{align}\label{eq:prop_error_NS_1}
| \overline{u} (t) |_{W^{k,p}   }  \leq     C_{ k,p }  \ep^{2 - k N   }  \mu^{ k -s  } \mu^{1 - \frac{2}{p} } \nu^{ \frac{1}{2}  - \frac{ 1}{p} } .
\end{align}

However, the approximate system \eqref{eq:prop_error_2} in Proposition \ref{prop:approximate} is slightly different:
\begin{equation}\label{eq:lemma_error_NS_2}
\begin{cases}
   \p_t  \overline{u} -\Delta \overline{u} + \overline{u}\cdot \nabla \overline{u}  + \nabla \overline{p} = \overline{E} & \\
\overline{u} |_{t = 0 } = u_0  & .
\end{cases}
\end{equation}
and the  error field $\overline{E}$ has one more term due to the added dissipation: 
\begin{equation}\label{eq:lemma_error_NS_4}
\overline{E}_d : = - \Delta \overline{u}  .
\end{equation} 

We need to show this new error term \eqref{eq:lemma_error_NS_4} is compatible with the estimate \eqref{eq:prop_error_3}. Indeed, by \eqref{eq:prop_error_NS_1}, we have
\begin{equation}\label{eq:lemma_error_NS_5}
| \nabla^k \overline{E}_d (t)|_{L^2 } \leq C_{\ep , k }\mu^{k+2 - s } ,
\end{equation}
and it follows from \eqref{eq:small_dissip_NS} that   the error field $ \overline{E}  $ in the viscous case satisfies the same estimate as  \eqref{eq:prop_error_3}
\begin{equation}\label{eq:lemma_error_NS_3}
\begin{aligned}
    | \nabla^k \overline{E}(t)|_{L^2(\RR^3)} & \leq   C_{\ep,k }   \mu^{k - s } \Big(\mu^{-1} \nu   (\mu^{2 -  s}    \nu^\frac{1}{2} ) + \mu^{2  }  \Big)   \\
& \leq   C_{\ep,k }   \mu^{k - s } \Big(\mu^{-1} \nu   (\mu^{2 -  s}    \nu^\frac{1}{2} )   \Big)   .
\end{aligned}
\end{equation}
In other words, Proposition \ref{prop:approximate} also holds in the viscous case for the system \eqref{eq:lemma_error_NS_2}.

Finally, we review the argument in Section \ref{sec:no_blowup_proof} for the viscous case. In the proof of Proposition \ref{prop:no_blowup}, the key ingredient is the estimates for $ |  \nabla^k  w  |_{L^{2}}^2$, which were proved in Lemma \ref{lemma:bootstrap_L2} and Lemma \ref{lemma:bootstrap_Hk}.

 In the case of 3D Navier-Stokes equation, we have the following system for $w = u  - \overline{ u}$
\begin{equation}\label{eq:aux_noblowup_NS}
\begin{cases}
\p_t w  - \Delta w + u\cdot \nabla w + w \cdot \nabla  \overline{u} + \nabla q = \overline{E} & \\
\D w = 0 & \\
w |_{t = 0 } = 0  . &
\end{cases}
\end{equation}

Since the new error term $\overline{E} $ obeys the same estimates, the  following differential inequality holds for $k = 0 $ as in Lemma \ref{lemma:bootstrap_L2}:
\begin{align}\label{eq:aux_noblowup_NS_2a}
    \frac{d}{dt} |    w  |_{L^{2}}  & \lesssim  C_\ep \mu^{2 -s} \nu^\frac{1}{2}  |   w  |_{L^{2}}        +           \mu^{ -  s } \big(\mu^{  2 -s}  \nu^\frac{1}{2}  \big)  (\mu^{-1}  \nu )  ,  
\end{align} 
where we dropped the positive dissipation term on the left-hand side.

Repeating the argument of Lemma \ref{lemma:bootstrap_Hk} we can also recover \eqref{eq:aux_noblowup_24} for the system \eqref{eq:aux_noblowup_NS}, and obtain for $k \in \NN$ the differential inequality
\begin{align}\label{eq:aux_noblowup_NS_2}
    \frac{d}{dt} |  \nabla^k  w  |_{L^{2}}^2  & \lesssim  C_\ep \mu^{2 -s} \nu^\frac{1}{2}  |  \nabla^k w  |_{L^{2}}^2        +           \mu^{2k - 2s } \big(\mu^{  2 -s}  \nu^\frac{1}{2}  \big)  (\mu^{-1}  \nu )^{ 2 \delta_k } ,  
\end{align} 
where we have also discarded  the  dissipation term on the left.

By \eqref{eq:aux_noblowup_NS_2a} and \eqref{eq:aux_noblowup_NS_2}, Lemma \ref{lemma:bootstrap_L2} and Lemma \ref{lemma:bootstrap_Hk} also hold for the system \eqref{eq:aux_noblowup_NS} in the viscous case. Therefore, we can still prove Proposition \ref{prop:no_blowup}, and Theorem \ref{thm:Hs_NS} follows.

\subsection*{Acknowledgment}

The author is grateful to the organizers of the workshop ``Analysis of fluids'' at Osaka Metropolitan University, Osaka, Japan in March 2024 for their warm hospitality, where part of the work was done. He is also grateful to Tarek Elgindi for stimulating conversations. Special thanks to   Alexey Cheskidov for pointing out an  exponent error in a previous version of the manuscript.

\appendix

\bibliographystyle{alpha}
\bibliography{euler_ill_2024}

\end{document}